\documentclass[letterpaper, 10 pt, conference]{ieeeconf}  % Comment this line out
                                                          % if you need a4paper
\usepackage{balance}
\usepackage{color}
\usepackage{subfigure}
\usepackage{caption}
\usepackage{cite}
\usepackage{empheq}
\usepackage{mathrsfs}
\usepackage{mleftright}
\usepackage{bbm}

\usepackage{mathtools}

\makeatletter
                         % make \IEEEproof do same as \proof
              % make \IEEEendproof do same as \endproof
\let\proof\@undefined                        % undefine \proof
\let\endproof\@undefined                  % undefine \endproof
\makeatother
\usepackage{graphicx,amssymb,amstext,amsmath,amsthm}

\usepackage{multirow}
\usepackage{stfloats}

\newtheorem{prop}{Proposition} % this could go into the preamble

\newtheorem{thm}{Theorem}
	
\newtheorem{lem}{Lemma}
\newtheorem{defn}{Definition}

\newtheorem{rem}{Remark}

\newtheorem{prob}{Problem}

% Package for algorithms.
\usepackage{algorithm}
\usepackage[noend]{algpseudocode}
\usepackage{pifont}

\errorcontextlines\maxdimen

% begin vertical rule patch for algorithmicx (http://tex.stackexchange.com/questions/144840/vertical-loop-block-lines-in-algorithmicx-with-noend-option)
\makeatletter
% start with some helper code
% This is the vertical rule that is inserted
    \newcommand*{\algrule}[1][\algorithmicindent]{\makebox[#1][l]{\hspace*{.5em}\thealgruleextra\vrule height \thealgruleheight depth \thealgruledepth}}%
% its height and depth need to be adjustable
\newcommand*{\thealgruleextra}{}
\newcommand*{\thealgruleheight}{.75\baselineskip}
\newcommand*{\thealgruledepth}{.25\baselineskip}

\newcount\ALG@printindent@tempcnta
\def\ALG@printindent{%
    \ifnum \theALG@nested>0% is there anything to print
        \ifx\ALG@text\ALG@x@notext% is this an end group without any text?
            % do nothing
        \else
            \unskip
            \addvspace{-1pt}% FUDGE to make the rules line up
            % draw a rule for each indent level
            \ALG@printindent@tempcnta=1
            \loop
                \algrule[\csname ALG@ind@\the\ALG@printindent@tempcnta\endcsname]%
                \advance \ALG@printindent@tempcnta 1
            \ifnum \ALG@printindent@tempcnta<\numexpr\theALG@nested+1\relax% can't do <=, so add one to RHS and use < instead
            \repeat
        \fi
    \fi
    }%
\usepackage{etoolbox}
% the following line injects our new indent handling code in place of the default spacing
\patchcmd{\ALG@doentity}{\noindent\hskip\ALG@tlm}{\ALG@printindent}{}{\errmessage{failed to patch}}
\makeatother

% the required height and depth are set by measuring the content to be shown
% this means that the content is processed twice
\newbox\statebox
\newcommand{\myState}[1]{%
    \setbox\statebox=\vbox{#1}%
    \edef\thealgruleheight{\dimexpr \the\ht\statebox+1pt\relax}%
    \edef\thealgruledepth{\dimexpr \the\dp\statebox+1pt\relax}%
    \ifdim\thealgruleheight<.75\baselineskip
        \def\thealgruleheight{\dimexpr .75\baselineskip+1pt\relax}%
    \fi
    \ifdim\thealgruledepth<.25\baselineskip
        \def\thealgruledepth{\dimexpr .25\baselineskip+1pt\relax}%
    \fi
    %\showboxdepth=100
    %\showboxbreadth=100
    %\showbox\statebox
    \State #1%
    %\State \usebox\statebox
    %\State \unvbox\statebox
    %reset in case the next command is not wrapped in \myState
    \def\thealgruleheight{\dimexpr .75\baselineskip+1pt\relax}%
    \def\thealgruledepth{\dimexpr .25\baselineskip+1pt\relax}%
}
% end vertical rule patch for algorithmicx

%\algtext*{EndWhile}% Remove "end while" text
%\algtext*{EndIf}% Remove "end if" text
%\algtext*{EndFor}% Remove "end for" text

%\setlength{\abovedisplayskip}{3.575pt}
%\setlength{\belowdisplayskip}{3.575pt}
\setlength{\abovedisplayskip}{5pt}
\setlength{\belowdisplayskip}{5pt}
\setlength{\textfloatsep}{5pt}
\setlength{\floatsep}{5pt}
\setlength{\intextsep}{5pt}
\setlength{\abovecaptionskip}{1pt}
\setlength{\belowcaptionskip}{-3pt}

%\setlength{\arraycolsep}{1.1pt}

%\usepackage{setspace}
%\setstretch{0.96} 

%\let\oldbibliography\thebibliography
%\renewcommand{\thebibliography}[1]{%
%  \oldbibliography{#1}%
%  %\setlength{\itemsep}{-0.5pt}%
%}

%\pdfinfo{
 %  /Author (Homer Simpson)
 %  /Title  (Robots: Our new overlords)
 %  /CreationDate (D:20101201120000)
 %  /Subject (Robots)
 %  /Keywords (Robots;Overlords)
%}

\IEEEoverridecommandlockouts

\begin{document}

\title{\LARGE \bf Weak Adaptive Submodularity and Group-Based Active Diagnosis with Applications to State Estimation with Persistent Sensor Faults}

% You will get a Paper-ID when submitting a pdf file to the conference system
%\author{Author Names Omitted for Anonymous Review. Paper-ID Sze Zheng Yong}
\author{%
Sze Zheng Yong$^{\,\rm a}$ \qquad Lingyun Gao$^{\,\rm b}$  \qquad  Necmiye Ozay$^{\,\rm b}$
%Sze Zheng Yong$^{\,\rm a}$ \quad Lingyun Gao$^{\, \rm a}$  \quad  Necmiye Ozay$^{\,\rm a}$
\thanks{This work was supported by an Early Career Faculty grant from NASA's Space Technology Research Grants Program and DARPA grant N66001-14-1-4045 and was done at the University of Michigan, Ann Arbor.}
\thanks{$^{\rm a}$ School for Engineering in Matter, Transport and Energy, Arizona State University, Tempe, AZ, USA. (email: {\tt szyong@asu.edu})}
\thanks{$^{\rm b}\, $Department of Electrical Engineering and Computer Science, University of Michigan, Ann Arbor, MI, USA. (e-mail: {\tt \{glingyun,necmiye\}@umich.edu})}\vspace*{-0.5cm}
%\thanks{$^{\rm a}$ S.Z Yong, L. Gao and N. Ozay are with the Department of Electrical Engineering and Computer Science, University of Michigan, Ann Arbor, MI, USA (e-mail: \{szyong,glingyun,necmiye\}@umich.edu).}
}

\maketitle
\thispagestyle{empty}
\pagestyle{empty}

\begin{abstract}                          % Abstract of not more than 200 words.
In this paper, we consider adaptive decision-making problems for stochastic state estimation with partial observations. First, we introduce the concept of \emph{weak adaptive submodularity}, a generalization of adaptive submodularity, which has found great success in solving challenging adaptive state estimation problems. Then, for the problem of active diagnosis, i.e., discrete state estimation via active sensing, we show that an adaptive greedy policy has a near-optimal performance guarantee when the reward function possesses this property. %Next, we consider group-based active diagnosis, which arises in applications such as medical diagnosis and state estimation with persistent sensor faults. 
We further show that the reward function for  group-based active diagnosis, which arises in applications such as medical diagnosis and state estimation with persistent sensor faults, is also weakly adaptive submodular. %; hence an adaptive greedy policy solves this problem with guaranteed near-optimal performance. 
Finally, in experiments of state estimation for an aircraft electrical system with persistent sensor faults, we observe that an adaptive greedy policy performs equally well as an exhaustive search.
\end{abstract}

\vspace{-0.025cm}
\section{Introduction}
\vspace{-0.025cm}
The operation and control of complex cyber-physical systems often requires the choice of a sequence of decisions %with uncertain outcomes 
under partial observability. This problem of adaptive decision-making appears in various applications, both in the context of stochastic control (e.g., in robot navigation \cite{kaelbling1998} and reinforcement learning \cite{sutton1998}) and stochastic state estimation (e.g., in  information gathering in robotics\cite{singh2007,javdani2014}, fault diagnosis in nuclear plants \cite{santoso1999} and sensor placement \cite{golovin2011,debouk2002}). In particular, the research area of active diagnosis and active learning, i.e., the problem of discrete object or state  identification through a minimal number of sequential actions, has many real-world applications in medical diagnosis and emergency response \cite{jaakkola1998,bellala2012}, electrical systems diagnosis \cite{maillet2013}, scheduling \cite{kosaraju1999}, 
etc.  Hence, progress in this direction can have a significant impact on a broad range of problems in robotics, cyber-physical systems and artificial intelligence. 
%The operation and control of complex cyber-physical systems typically requires the adaptive choice of a sequence of decisions with uncertain outcomes under partial observability. This problem of adaptive decision-making appears in various applications such as information gathering in robotics\cite{singh2007,javdani2014}, fault diagnosis in nuclear plants \cite{santoso1999}, power delivery systems \cite{yongli2006}, sensor placement problems \cite{golovin2011}, computer vision \cite{swain1993}, etc. In particular, the research area of active diagnosis and active learning, i.e., the problem of object or state or hypothesis identification through a minimal number of actions or queries, has many real-world applications in medical diagnosis and emergency response \cite{jaakkola1998,bellala2012}, state estimation \cite{maillet2013}, job scheduling \cite{kosaraju1999}, etc.  Hence, progress in this direction can have a significant impact on a broad range of problems in robotics, cyber-physical systems and artificial intelligence. %Yet stochastic optimization problems are known to be

\emph{Literature Review.} Finding optimal policies for general partially observable stochastic estimation problems is often an intractable combinatorial optimization problem. Hence, %for certain problems where a subset from a finite set of actions are to be chosen, 
researchers, e.g., \cite{summers2014,tzoumas2016} in the context of actuator and sensor placement, have appealed to a diminishing returns property known as submodularity, which plays a similar role in combinatorial optimization as convexity in continuous optimization. %\yong{Bla bla la la la bla Bla Bla bla la la la bla Bla Bla bla la la la bla Bla bla la la la bla Bla bla la la la bla Bla bla la la la bla Bla bla la la la bla Bla bla la la la bla}\cite{summers2014,tzoumas2016} 
Moreover, a recent paper \cite{golovin2011} introduced the notion of adaptive submodularity for set functions that extends submodularity to the adaptive setting, i.e., when actions are sequentially chosen based on observations, and provided %.introduced a particular class of set functions, called adaptive submodular functions, for which 
near-optimal performance guarantees for adaptive greedy policies. %This approach has been successfully applied in various artificial intelligence applications (cf. \cite{golovin2011} for a list) and also in discrete state estimation via active sensing \cite{maillet2013}. 
However, there are many applications in which greedy policies perform very well even when the objective function in question is not adaptive submodular. This was observed in \cite{das2011} in the non-adaptive setting, where ``near-submodular" set functions are shown to also lead to near-optimal performance guarantees. %for the dictionary selection problem, 
%and the authors introduced the notion of submodularity ratio to characterize the nearness to submodularity of a function. 
The paper \cite{kusner2014} hinted that this will hold in the adaptive case, but  without providing formal justification.  Thus, there is still a need for a rigorous definition of ``near-submodular'' set functions in the adaptive setting, and for the derivation of performance guarantees for adaptive decision-making problems with such functions. 
%Finding optimal adaptive policies for general partially observable stochastic decision-making problems is often intractable. Hence, a relatively recent paper \cite{golovin2011} introduced a particular class of set functions, called adaptive submodular functions, for which near-optimal guarantees can be derived when used in conjunction with  adaptive greedy policies. This approach has been successfully applied in various artificial intelligence applications (cf. \cite{golovin2011} for a list) and also in discrete state estimation via active sensing \cite{maillet2013}. However, there are many other applications in which greedy policies perform extraordinarily well when the function in question is not adaptive submodular. This has been observed in \cite{das2011} in the non-adaptive setting %for the dictionary selection problem, 
%and the authors introduced the notion of submodularity ratio to characterize the nearness to submodularity of a function. It was hinted that this ``near-submodular" notion also extends to the adaptive case in \cite{kusner2014} without formal justification.  Thus, there is still a need for a rigorous definition of this ``near-submodular'' property for set functions in the adaptive setting, and performance guarantees for adaptive decision-making problems with such functions. % are still lacking. % at present.

For the problem of active diagnosis (a.k.a. adaptive stochastic maximization), where actions or queries are selected sequentially to maximize the
accuracy of diagnosis with a given budget of actions, \cite{golovin2011} showed that the reward function for this problem is adaptive monotone submodular, when outcomes are not corrupted by noise or faults. Hence, an adaptive greedy policy yields the best polynomial-time approximation algorithm \cite{golovin2011}. %\cite{feige1998}.  
%has been studied in \cite{golovin2011} in the noiseless case (i.e., when outcomes are not corrupted by noise or faults). By showing that that the objective function for this problem is adaptive monotone submodular, they showed that an adaptive greedy policy yields the best polynomial-time approximation algorithm \cite{golovin2011,feige1998}.
%can be seen as a generalization of the well-known Maximum Coverage Problem in computer science and operations research to the adaptive setting. In the noiseless case (i.e., when outcomes are not corrupted by noise or faults), \cite{golovin2011} showed that the objective function for this problem is adaptive monotone submodular, hence an adaptive greedy policy yields the best polynomial-time approximation algorithm \cite{feige1998}.
However, the case when the outcomes are corrupted by noise or faults is less
well studied. The papers \cite{bellala2013,zheng2012} considered this problem when the fault is stochastic/non-persistent, i.e., the case when repeated measurements may yield different outcomes, but the proposed algorithms have no provable performance guarantees. %competitiveness with the optimal policy. 
And to our best knowledge, the active diagnosis problem with persistent noise or faults has not been considered, although this has been investigated for the complementary problem known as active learning in \cite{bellala2012,golovin2010}. In these approaches, proxy set functions were found to indirectly prove the near-optimality of the original problem. However, corresponding specialized algorithms were needed and it is unclear how these apply to our active diagnosis problem. In fact, these papers \cite{bellala2012,golovin2010} considered a more general problem of group-based active learning, also studied in \cite{javdani2014}, of which active learning with persistent noise is a special case. The objective of group-based active learning is to learn the group to which an object or objects belong, as opposed to the objects themselves with a minimum number of actions or queries. On the other hand, the problem of group-based active diagnosis has, to the extent of our knowledge, not been investigated, despite its potentially broad applicability.

\emph{Contributions.} To close the gap between non-adaptive and adaptive settings, we rigorously define the concept of \emph{weak adaptive submodularity} for set functions, as a generalization of (strong) adaptive submodularity, and prove that an adaptive greedy policy %, approximate or exact, 
yields competitive near-optimal performance guarantees for the active diagnosis problem, when the objective function is adaptive monotone and weakly adaptive submodular. We observe that the adaptive submodularity factor that characterizes nearness to (strong) adaptive submodularity affects the performance bound in the same way as when approximate greedy policies are employed.

Then, we consider the \emph{group-based active diagnosis} problem and show that the reward function corresponding to group identification is both adaptive monotone and weakly adaptive submodular; thus %Since this is an instance of the active diagnosis problem, 
we also obtain competitive performance guarantees for this problem %e group-based active diagnosis 
when using a simple greedy policy without the need for proxy set functions nor specialized algorithms. We believe that our approach is novel and valuable especially for its simplicity. Moreover, the adaptive submodularity factor also offers an alternative explanation for the deterioration of the performance guarantees that was observed in group-based active learning when proxy set functions are introduced (cf. \cite{bellala2012,golovin2010,javdani2014}).

Furthermore, we demonstrate that the problem of \emph{active diagnosis with persistent noise or fault} is equivalent to the group-based active diagnosis problem; hence, once again, we have provably competitive performance guarantees for the adaptive greedy policy and do not require proxy set functions and algorithms. In addition, our general formulation allows for state-dependent sensor noise or fault, and also complex faults, %beyond just `flipping the bit' (i.e., giving the opposite outcome), 
such as faults that give the same outcome regardless of the system states, or faults that always give the opposite outcome. Our experiments on aircraft electrical systems show that the simple adaptive greedy policy performs just as well as a brute-force policy that tries all possible actions, %(thus, violating the given budget on the number of actions or queries), 
while only having a polynomial-time computational complexity.

\section{Motivation: Aircraft Electrical System with Persistent Sensor Faults}
\vspace{-0.05cm}
We are motivated by the discrete state estimation problem of an aircraft electrical system via active sensing, first studied in \cite{maillet2013} (see Figure \ref{fig:simple} for an example of a simple circuit and the readers are referred to \cite{maillet2013} for a detailed description of the electrical circuits). As more systems become more dependent on electric power systems, this problem is crucial to the safety of these systems. In the above work, the sensors are assumed to be \emph{healthy} or faultless. But in reality, sensors can be faulty, which means that they can provide incorrect information about the unknown discrete state (i.e., operating condition or status) of the electrical components. The faults can be either stochastic or persistent, and in this paper, we are mainly interested in persistent faults. One such fault is when a faulty sensor persistently gives the opposite reading and another is when a faulty sensor always gives a constant reading (hence sometimes correctly and others incorrectly). 

Despite persistent faults, our goal is similar to that in \cite{maillet2013}, i.e., to design a policy that can adaptively estimate the discrete state of the circuit by taking ``actions" (i.e., opening or closing controllable contactors) and observing the sensor measurements. Note that we are only interested in estimating the unknown discrete state of the system and not the true failure mode of the sensors. Hence, if we group all possible sensor failure modes that correspond to each state, our problem of interest is one of \emph{group-based active diagnosis}, i.e., we aim to adaptively identify or estimate which group (i.e., state) is compatible with our sensor measurements. However, the reward function for this problem is not adaptive submodular, which motivates the need for the concept of weak adaptive submodularity, discussed next.

\begin{figure}[!t]
\centering
\includegraphics[width=0.25\textwidth]{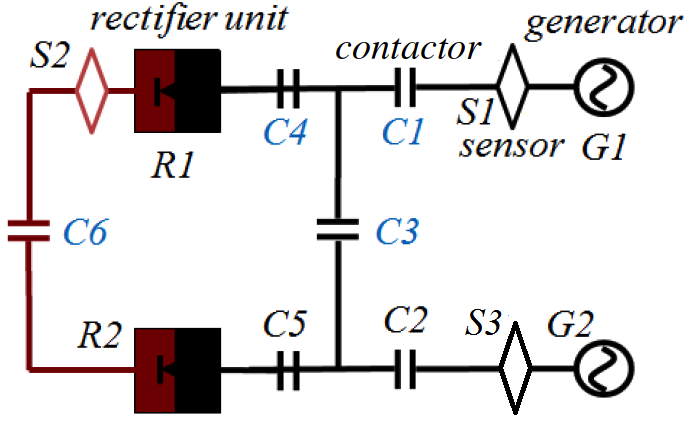} %\vspace{-0.075cm}

\caption{A single-line diagram of a simple circuit with AC components (in black) and DC components (in red).}\vspace{-0.05cm}
\label{fig:simple}
\end{figure}

%\vspace{-0.05cm}
\section{Weak Adaptive Submodularity}
\vspace{-0.05cm}
In this section, we propose a generalization of the concept of adaptive submodularity and show that adaptive greedy policies achieve near-optimal performance for objective functions with such a property. These results will be useful for group-based active diagnosis, considered in the next section.

\vspace{-0.05cm}
\subsection{Mathematical Preliminaries and Definitions}%Definitions and Problem Statement}
\vspace{-0.05cm}
%\begin{prob}[Adaptive Stochastic Maximization] \label{prob:1}
%The objective of the Adaptive Stochastic Maximization problem is to find a policy $\pi^*$ such that
%\begin{align}
%\begin{array}{l}
%\pi^* \in \displaystyle\arg \max_{\pi} \mathbb{E}[f(\tilde{\mathcal{V}}(\pi,\mathbf{X}),\mathbf{X})]\\
%\mathrm{subject \ to\quad } |\tilde{\mathcal{V}}(\pi,\mathbf{X})| \leq k, \forall \mathbf{x} \in \boldsymbol{\mathcal{X}},
%\end{array}
%\end{align}
%with expectation taken with respect to $\mathbb{P}[\mathbf{x}]$.
%\end{prob}

We begin by introducing notations for the adaptive decision-making problem we consider. %Our decisions involve the adaptive choice of an action (or a query) from 
Given a finite set of actions (or queries), $v\in\mathcal{V}$, our objective is to adaptively select actions to estimate a fixed but hidden state, $\mathbf{x}_0\in\boldsymbol{\mathcal{X}}$. We consider a Bayesian approach and model the state as a random variable, $\mathbf{X}$, with a given prior probability distribution $\mathbb{P}[\mathbf{x}]$ on $\boldsymbol{\mathcal{X}}$. When we take an action $v$, we observe the (sensor) measurement or outcome $y\in\mathcal{Y}$, which we will use to sequentially make a decision about the next action. We represent the pairs of actions $\lbrace v_1, \hdots, v_t\rbrace$ and observed outcomes $\lbrace y_1, \hdots, y_t\rbrace$ up to time $t$ by a partial realization $\psi_t=\lbrace(v_i, y_i)\rbrace_{i\in\lbrace 1, ..., t\rbrace}$. Given two partial realizations $\psi_t$ and $\psi_{t'}$, we call $\psi_t$ a subrealization of $\psi_{t'}$ if $\psi_t\subseteq\psi_{t'}$.

To evaluate ``progress" in the decision-making process, 
we define an objective function $f : 2^{\mathcal{V}}\times \boldsymbol{\mathcal{X}} \to \mathbb{R}_{\geq 0} $ that maps the set of actions $A \subseteq 2^\mathcal{V}$ under state $\mathbf{x}_0$ to reward $f(A, \mathbf{x}_0)$. %\yong{Since the set of observations, $Y_A$, is deterministic, we will abuse the notation and use $f(A, \mathbf{x}_0)$ in place of $f(A,Y_A, \mathbf{x}_0)$ for conciseness.}  
A policy $\pi$ is a function from a set of partial realizations to actions, which specifies the action to choose next  after observing  $\psi_t$, i.e., $v_{t+1}=\pi(\psi_t)$, 
 and  $\tilde{\mathcal{V}}(\pi, \mathbf{x}_0)\subseteq 2^\mathcal{V}$ denotes the set of all actions under policy $\pi$. Randomized policies that specifies the distribution on actions are also allowed.

Next, we provide definitions specific to set functions in \cite{golovin2011}, followed by the definition of weak adaptive submodularity.

\vspace{-0.1cm}
\begin{defn}[Conditional Expected Marginal Benefit] \label{def:marginGain}
Given an objective function $f$, an action $v \in \mathcal{V}$ and a partial realization $\psi_t$, the \emph{conditional expected marginal benefit} of $v$ conditioned on having observed $\psi_t$ is defined as
\begin{align} \label{eq:margin}
\Delta(v|\psi_t) \triangleq \mathbb{E}[f(v_{1:t} \cup \{v\}, \mathbf{X} )-f(v_{1:t}, \mathbf{X} )|\psi_t],
\end{align}
with the expectation taken with respect to $\mathbb{P}[\mathbf{x}|\psi_t]$. Similarly, the conditional expected marginal benefit of a policy $\pi$ is
\begin{align} 
\Delta(\pi|\psi_t) \triangleq \mathbb{E}[f(v_{1:t} \cup \tilde{\mathcal{V}}(\pi, \mathbf{x}_0), \mathbf{X} )-f(v_{1:t}, \mathbf{X} )|\psi_t].
\end{align}
\end{defn}

The following definition of adaptive monotonicity has the interpretation that the conditional expected marginal benefit of any action $v$ is non-negative.
\begin{defn}[Adaptive Monotonicity] \label{def:monotone}
A function $f:2^{\mathcal{V}}\times \boldsymbol{\mathcal{X}} \to \mathbb{R}_{\geq 0}$ is \emph{adaptive monotone} with respect to distribution $\mathbb{P}[\mathbf{x}]$ if  for all $v \in \mathcal{V}$ and $\psi_t$ with $\mathbb{P}[\psi_t]>0$, $\Delta (v|\psi_t) \geq 0.$
\end{defn}

Next, we define a new property that has the interpretation that 
%Moreover, if 
the conditional expected marginal benefit of any fixed action (or query) $v$ does not increase ``too much" as more actions are performed and their measurements are observed. %, then following property holds.

\begin{defn}[$\zeta$-Weak Adaptive Submodularity] \label{def:submodular}
A function $f:2^{\mathcal{V}}\times \boldsymbol{\mathcal{X}} \to \mathbb{R}_{\geq 0}$ is \emph{$\zeta$-weakly adaptive submodular} with respect to distribution $\mathbb{P}[\mathbf{x}]$  if for all $\psi_t,\psi_{t'}$ such that $\psi_t$ is a subrealization of $\psi_{t'}$, i.e., $\psi_t \subseteq \psi_{t'}$, and for all $v \in \mathcal{V}\setminus v_{1:t'}$, %we have
$$ \Delta (v|\psi_{t'}) \leq \zeta \Delta (v|\psi_t),$$
for some constant $\zeta \geq 1$ that we refer to as the adaptive submodularity factor. Additionally, let $\zeta^*\leq \zeta$ be the smallest such $\zeta$, referred to as the best adaptive submodularity factor.
\end{defn}

From the above definition, it is clear that for any $\zeta \geq \zeta'$, if $f$ is $\zeta'$-weakly adaptive submodular, then $f$ is also $\zeta$-weakly adaptive submodular. Further, (strongly) adaptive submodular functions $f$ (with $\zeta^*=1$) are also $\zeta$-weakly adaptive submodular for any $\zeta \geq 1$; hence, this is a generalization of (strong) adaptive submodularity. In fact, any set function is $\zeta$-weakly adaptive submodular with some (possibly infinite) $\zeta$. %Note also that $\zeta^*$ has some relation to the reciprocal of `submodularity ratio' defined in \cite{das2011} for the non-adaptive setting, and 
Note that our definition differs from approximate adaptive submodularity that was defined in \cite{kusner2014} without justification.

%\subsection{Adaptive Greedy Policy} % for Adaptive Submodular Functions}
%
%For reward functions with the approximate adaptive submodularity property, the adaptive greedy algorithm, i.e., a strategy that selects the action that maximizes the conditional expected marginal benefit, conditioned on all previous actions and observations:
%\begin{align} \label{eq:greedy1}
%v_{t+1} &\in \displaystyle\arg\max_{v \in \mathcal{V}} \Delta (v|\psi_t), 
%\end{align}
%with $\Delta(v|\psi_t)$ as given in \eqref{eq:margin}, can be shown to provide very nice performance guarantees, described in the next subsections. 
%%In fact, for the problems that we consider (cf. Problems \ref{prob:1} and \ref{prob:2}), it was shown that no polynomial time approximation algorithm can outperform the adaptive greedy strategy in \cite{feige1998} and \cite[Lemma A.14]{golovin2011}, respectively.
%% unless $P = NP$ for the adaptive stochastic maximization problem and unless $NP \subseteq DTIME(|\boldsymbol{\mathcal{X}}|^{O(\log \log |\boldsymbol{\mathcal{X}}|)})$ \cite[Lemma A.14]{golovin2011} for the adaptive stochastic minimum cost cover problem.
%
%\subsubsection{Incorporating Non-Uniform Action Cost}
%
%\subsubsection{$\alpha$-Approximation Greedy Policies}

\subsection{Adaptive Greedy Policies}

Weak adaptive submodularity resembles a diminishing returns property for policies, which suggests that an adaptive greedy policy can provide nice performance guarantees.
Thus, we consider an adaptive greedy policy that, at each iteration, myopically follows the greedy heuristic of maximizing its expected gain given its current observations: 
\begin{align*}
\pi^{greedy}(\psi_t)\triangleq v_{t+1} \in \arg\max_{v \in \mathcal{V}} \Delta(v|\psi_t).
\end{align*}
Such greedy strategies do not in general produce an optimal solution when compared to clairvoyant algorithms, but are oftentimes near-optimal in practice and  have the advantage of having a polynomial-time computational complexity.

%Several extensions of the adaptive greedy policy (cf. \cite{golovin2011}) can also be easily handled as follows:
%\subsubsection{Non-Uniform Action Costs}
%
%As with problems with (strong) adaptive submodularity, the adaptive greedy algorithm can naturally handle non-uniform action costs by simply modifying its selection criterion to be
%\begin{align*}
%v_{t+1} \in \arg\max_{v \in \mathcal{V}} \frac{\Delta(v|\psi_t)}{c(v)}.
%\end{align*}
%
%\subsubsection{Approximate Adaptive Greedy Policies}
%Moreover, if finding an action that maximizes the above is computationally intractable, we can instead consider an $\alpha$-approximate greedy policy that finds an action $v_{t+1}$ such that
%\begin{align*}
%\frac{\Delta(v_{t+1}|\psi_t)}{c(v_{t+1})} \geq \frac{1}{\alpha} \max_{v \in \mathcal{V}} \frac{\Delta(v|\psi_t)}{c(v)}.
%\end{align*}
%For clarity of exposition, in the following, we will only focus on the uniform cost and exact adaptive greedy policy case. We will defer the analysis with costs and approximate adaptive greedy policies to Appendix \ref{app:submodular}.

\subsection{Performance Guarantees for Active Diagnosis} %Adaptive Stochastic Maximization}

We consider an important class of problems as follows: %For both reward functions, i.e., $f_{sub}(v_{1:t},\mathbf{x})$ for the group identification problem with subgroups and $f_{sup}(v_{1:t},\mathbf{x})$ for the group identification problem with supergroups or regions, we consider the following problems.

%\begin{prob}[Adaptive Stochastic Maximization] \label{prob:1}
\begin{prob}[Active Diagnosis] \label{prob:1}
%The objective of the Adaptive Stochastic Maximization Problem is to 
Given a reward function $f$, find a policy $\pi^*$ with a budget of $k$ actions such that
\begin{align}
\begin{array}{l}
\pi^* \in \displaystyle\arg \max_{\pi} \mathbb{E}[f(\tilde{\mathcal{V}}(\pi,\mathbf{X}),\mathbf{X})]\\
\mathrm{subject \ to\quad } |\tilde{\mathcal{V}}(\pi,\mathbf{x})| \leq k, \forall \mathbf{x} \in \boldsymbol{\mathcal{X}},
\end{array}
\end{align}
with expectation taken with respect to $\mathbb{P}[\mathbf{x}]$ and $\tilde{\mathcal{V}}(\pi,\mathbf{x}) \subseteq 2^\mathcal{V}$ is the set of all actions under policy $\pi$ when the state is $\mathbf{x}$.
\end{prob}

It turns out that for this class of problems, the weak adaptive submodularity property offers a very nice performance guarantee when used in conjunction with adaptive greedy policies, which we will prove in Appendix \ref{app:submodular}.
%\begin{lem}[Adaptive Data Dependent Bound] \label{lem:bound}
%Suppose we have observed partial realizations $\psi_t$ after selecting $v_{1:t}$ and let $\pi^*$ be any policy such that $|\tilde{\mathcal{V}}(\pi^*,\mathbf{x}_0)|\leq k$ for all $\mathbf{x}_0$. Then for adaptive monotone and $\zeta$-adaptive near-submodular $f:2^\mathcal{V} \times \boldsymbol{\mathcal{X}} \to \mathbb{R}_{\geq 0}$, we have
%\begin{align*}
%\Delta(\pi^*|\psi_t) \leq \zeta \max_{\mathcal{A}\subseteq \mathcal{V},|\mathcal{A}|\leq k} \sum_{v \in \mathcal{A}} \Delta(v|\psi_t).
%\end{align*}
%\end{lem}

%Note that this is possiblty what Kusner was trying to write but did not, and $\zeta$-approximate adaptive submodularity is a sufficient condition for this, which is really what is needed for showing the performance guarantee.

\begin{thm} \label{thm:greedy1}
Fix any $\zeta \geq 1$. Let the greedy policy $\pi_\ell^{greedy}$ be run for $\ell$ iterations (so that it select $\ell$ actions), and $\pi^*_k$ be any policy selecting at most $k$ actions for any realization $\mathbf{x}$. Then for adaptive monotone and $\zeta$-weakly adaptive submodular $f$, %we have %, and $\pi$ is a greedy policy, then for all policies $\pi^*$ and positive integers $\ell$ and $k$, we have
\begin{align*}
f_{avg}(\pi_\ell^{greedy}) > (1-e^{-\ell/\zeta k}) f_{avg}(\pi^*_k),
\end{align*}
where $f_{avg} (\pi) \triangleq \mathbb{E}[f(\tilde{\mathcal{V}}(\pi,\mathbf{X}),\mathbf{X})]$ is the expected reward of the policy $\pi$ with respect to the distribution $\mathbb{P}[\mathbf{x}]$.
\end{thm}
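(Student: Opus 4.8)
The plan is to adapt the classical adaptive-greedy argument of \cite{golovin2011} while tracking the extra factor $\zeta$ contributed by weak adaptive submodularity. I would introduce the optimality gap $\Delta_t \triangleq f_{avg}(\pi^*_k) - f_{avg}(\pi_t^{greedy})$ after $t$ greedy steps and the greedy increment $\delta_{t+1} \triangleq f_{avg}(\pi_{t+1}^{greedy}) - f_{avg}(\pi_t^{greedy}) = \mathbb{E}_{\psi_t}[\max_v \Delta(v|\psi_t)]$, the last equality holding by the definition of $\pi^{greedy}$. The entire theorem then reduces to a single one-step estimate, $\Delta_t \leq \zeta k\,\delta_{t+1}$. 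Granting it, since $\delta_{t+1} = \Delta_t - \Delta_{t+1}$, I obtain the recursion $\Delta_{t+1} \leq (1 - \tfrac{1}{\zeta k})\Delta_t$, iterate to get $\Delta_\ell \leq (1-\tfrac{1}{\zeta k})^\ell \Delta_0 < e^{-\ell/(\zeta k)}\Delta_0$, and use $f \geq 0$ to bound $\Delta_0 \le f_{avg}(\pi^*_k)$; rearranging yields the stated bound, with the strict inequality coming from $1 - y < e^{-y}$ for $y \neq 0$.

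To prove the one-step estimate I would proceed in two moves. First, let $\sigma$ denote the policy that runs $\pi_t^{greedy}$ for $t$ steps and then runs $\pi^*_k$ (each sub-policy ignoring the other's observations). For every realization the two concatenation orders induce the same unordered action set, so $f_{avg}(\sigma)$ equals the expected reward of running $\pi^*_k$ first and appending $\pi_t^{greedy}$; since appending a policy has nonnegative conditional expected marginal benefit by adaptive monotonicity (Definition \ref{def:monotone}), I conclude $f_{avg}(\pi^*_k) \le f_{avg}(\sigma)$. Subtracting $f_{avg}(\pi_t^{greedy})$ then gives $\Delta_t \le \mathbb{E}_{\psi_t}[\Delta(\pi^*_k|\psi_t)]$, where $\Delta(\pi^*_k|\psi_t)$ is the conditional expected marginal benefit of the policy $\pi^*_k$ from Definition \ref{def:marginGain}.

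The second, and decisive, move is the pointwise bound $\Delta(\pi^*_k|\psi_t) \le \zeta k\,\max_v \Delta(v|\psi_t)$. Unrolling $\pi^*_k$ one action at a time and telescoping its reward along each realization path, the $j$-th increment equals, in expectation, $\Delta(v_j^* \mid \psi_t \cup \psi^{*}_{j-1})$, the marginal benefit of the $j$-th action of $\pi^*_k$ conditioned on the realization that augments $\psi_t$ with the first $j-1$ action--outcome pairs chosen by $\pi^*_k$. As this augmented realization is a superrealization of $\psi_t$, $\zeta$-weak adaptive submodularity (Definition \ref{def:submodular}) applies directly, giving $\Delta(v_j^* \mid \psi_t \cup \psi^{*}_{j-1}) \le \zeta\,\Delta(v_j^* \mid \psi_t) \le \zeta\,\max_v \Delta(v|\psi_t)$; summing over the at most $k$ actions produces the factor $\zeta k$. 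Taking expectation over $\psi_t$ drawn from $\pi_t^{greedy}$ and recognizing $\mathbb{E}_{\psi_t}[\max_v \Delta(v|\psi_t)] = \delta_{t+1}$ completes the chain $\Delta_t \le \mathbb{E}_{\psi_t}[\Delta(\pi^*_k|\psi_t)] \le \zeta k\,\delta_{t+1}$.

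I expect the main obstacle to be making this third paragraph fully rigorous in the adaptive setting: decomposing a policy's expected gain into per-action marginal benefits requires careful conditioning through the tower property, and the comparison must be anchored to the base realization $\psi_t$ itself rather than the immediately preceding one, so that $\zeta$ enters linearly instead of compounding to $\zeta^{k}$. This is precisely what the form of Definition \ref{def:submodular} affords, since it bounds $\Delta(v|\psi_{t'})$ against $\Delta(v|\psi_t)$ for \emph{any} $\psi_t \subseteq \psi_{t'}$. I would also record the standard technical caveat that actions are not repeated, so that $v_j^* \in \mathcal{V}\setminus v_{1:t'}$ as the definition requires; this is without loss of generality, as a repeated action contributes no marginal benefit.
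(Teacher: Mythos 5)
Your proposal is correct and follows essentially the same route as the paper: the gap recursion $\Delta_{t+1}\le(1-\tfrac{1}{\zeta k})\Delta_t$ driven by the two bounds $f_{avg}(\pi^*_k)\le f_{avg}(\pi_t@\pi^*_k)$ (adaptive monotonicity) and $\Delta(\pi^*_k|\psi_t)\le \zeta k\max_v\Delta(v|\psi_t)$ (weak adaptive submodularity), followed by $1-x<e^{-x}$. The only difference is that you prove the key per-action telescoping bound inline, whereas the paper isolates it as Lemma~\ref{lem:bound} and defers its proof to a one-line adaptation of Lemma A.9 of \cite{golovin2011}; your anchoring of each increment to the base realization $\psi_t$ (so $\zeta$ enters linearly rather than as $\zeta^k$) is exactly the right reading of that adaptation.
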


A corollary from the above theorem is that our greedy policy that selects $k$ actions obtains at least $(1-e^{-1/\zeta})$ of the value of the optimal policy that selects $k$ actions. Note that if $\zeta$ is large, then the performance bound can be poor. %In fact, it was shown in \cite{feige1998} that no polynomial time approximation algorithm can outperform the adaptive greedy strategy. % and \cite[Lemma A.14]{golovin2011}, respectively

%\begin{rem}
%Lemma \ref{lem:bound} generalizes the equivalent definition of (non-adaptive) submodularity given by $f(T)\leq f(S) + \sum_{v \in T \setminus S} \Delta(v|S)$ for all $S\subseteq T \subseteq \boldsymbol{\mathcal{X}}$ to the adaptive setting with $\zeta \geq 1$. Further, $\zeta^*$ is the reciprocal of the \emph{submodularity ratio} defined for the non-adaptive case in \cite{das2011}.
%\end{rem}

\begin{rem}
The adaptive submodularity factor $\zeta$ affects the performance guarantee in Theorem \ref{thm:greedy1}  %(see also Theorem \ref{eq:greedyGuarantee}) 
in the exact way as the approximation factor of $\alpha$-approximate greedy policies. Just as approximate greedy policies suggest that greedy policies are robust to incorrect priors $\mathbb{P}[\mathbf{x}]$ by a factor $\alpha$ (cf. \cite[Section 4.3]{golovin2011} for a discussion), the $\zeta$-weak adaptive submodularity property is ``robust" to the deviation from (strong) adaptive submodularity by a factor $\zeta$.
\end{rem}

\section{Near-Optimal Group-Based Active Diagnosis}
\subsection{Problem Setup}

The objective of group-based active diagnosis is not to determine the unknown object but rather the group to which the object belongs via active sensing. For instance, in medical diagnosis, one is more interested to diagnose the disease that a patient has without necessarily identifying all its symptoms, while in movie recommendation systems, it may be of more interest to identify the genre that a user prefers as opposed to the exact scene preferences. As also noted in \cite{bellala2012}, the group-based diagnosis problem cannot be simply reduced to an active diagnosis problem with groups as ``meta-states", since the objects within a group do not generally yield the same observations. In our motivating example, our goal is to primarily estimate the hidden system state with no emphasis on finding the hidden persistent sensor faults. Further, under the hypotheses of different sensor failure modes for the same system state, the sensor measurements would be different. 

\subsubsection{Mathematical formulation} \label{sec:math}

In this section, we further introduce notations that are specific to the group-based active diagnosis problem. We will only consider non-overlapping groups in this paper; thus, to make the connection to active diagnosis with persistent faults clear, we will refer to groups as (system) states $x \in \mathcal{X}$ and objects in the groups as (sensor) modes $q \in \mathcal{Q}$. 
%Since it is clear that any group-based active diagnosis with non-overlapping decision regions (assumed in this paper) can be described with pairs of states and modes, we will introduce our mathematical notations for this problem in terms of system states and sensor modes. 
 We assume that the true state $x_0\in\mathcal{X}$ and the true mode $q_0\in\mathcal{Q}$ are fixed but unknown. As before, we will consider a Bayesian approach 
and model the state $X$ and mode $Q$ as random variables with a given prior joint probability distribution $\mathbb{P}[x,q]$ on $\mathcal{X} \times \mathcal{Q}$. Note that the modes need not be the same for each state. $\mathcal{Q}$ can be taken as the union of all modes and if a particular mode $q$ does not belong to a state $x$, then $\mathbb{P}[x,q]=0$.

%is pre-determined according to component properties. At the beginning of each estimation process, the system is in an unknown state $x_0\in\Omega$ and $q_0\in\mathcal{Q}$. We assume $x_0$ and $q_0$ will not change during the process.

%When we take an action (or make a query) $v\in\mathcal{V}$, we observe the (sensor) measurement/outcome $y\in\mathcal{Y}$. 
Given the true pair of state $x_0$ and mode $q_0$, $y=\mu(v,x_0,q_0)\in\mathcal{Y}$ is the unique outcome of performing action $v$. %The pairs of actions $\lbrace v_1, \hdots, v_t\rbrace$ and observed outcomes $\lbrace y_1, \hdots, y_t\rbrace$ are represented by the partial realization $\psi_t=\lbrace(v_i, y_i)\rbrace_{i\in\lbrace 1, ..., t\rbrace}$. Given two partial realizations $\psi_t$ and $\psi_{t'}$, we call $\psi_t$ a subrealization of $\psi_{t'}$ if $\psi_t\subseteq\psi_{t'}$. 
Further, we define $D(y,v,q)$ as the set of states $x \in \mathcal{X}$ that gives the same outcome $y$ under the action $v$ if $q$ were the true mode. %and $(x,q)$ is the true state and mode pair. 
We then define for each iteration $t$, $S_{t,q}$ as the set of all compatible states with the hypothesis that $q$ is the true mode, i.e., the set of all states that produce the same set of outcomes $\lbrace\mu(v_1, x_0, q_0), ..., \mu(v_t, x_0, q_0)\rbrace$  under the set of actions $\lbrace v_1, ..., v_t\rbrace$, %assuming sensor state is $q$. We denote $S_{t,q}$ to be the shorthand for $h(v_{1:t}, x_0, q_0, q)$. At each step $t$, we take a new action $v'$, there exists a recursive relation:
%\begin{equation}
%h(v_{1:t}\cup\lbrace v'\rbrace,x_0,q_0,q)=h(v_{1:t},x_0,q_0,q)\cap D(\mu(v', x_0,q_0), v',q)
%\end{equation}
which can also be written as
\begin{equation}
S_{t,q} = \cap_{i\in\lbrace1,...,t\rbrace}D(\mu(v_i,x_0,q_0),v_i,q).
\end{equation}
Since only intersections are taken, the order of actions $v_i$ does not matter.

\subsubsection{Problem statement}
The objective of group-based active diagnosis %in the context of state estimation with persistent sensor faults 
is to eliminate as many of the states as possible that are incompatible with observed outcomes, i.e., the uncertainty of $X$ represented by $\mathbb{P}[x]$. For the group-based active diagnosis problem, this means that at any step $t$, we cannot eliminate any state $x$ that is compatible with the hypothesis of one or more modes, i.e., $x \in \cup_{q\in \mathcal{Q}} S_{t,q}$. Thus, we define a reward function $f: 2^{\mathcal{V}}\times \mathcal{X}\times \mathcal{Q}\rightarrow\mathbb{R}_{\geq 0}$ as follows %which maps the set of actions $A\subseteq\mathcal{V}$ under state $x_0,q_0$ to reward $f(A,x_0,q_0)$.
\begin{align} \label{eq:reward}
f(v_{1:t},\mathbf{x})=f(v_{1:t},x,q)=1-\hspace{-0.1cm}\sum_{x \in \bigcup_{q \in \mathcal{Q}} S_{t,q}} \hspace{-0.2cm}\mathbb{P}[x],
%\begin{array}{rl}
%%f(v_{1:t},x,q)&=-\mathbb{P}[\cup_{x \in \mathcal{X}} \{x: \exists q \in \mathcal{Q} : x \in S_{t,q} \}]\\
%&f(v_{1:t},\mathbf{x})=f(v_{1:t},x,q)\\
%%&=1-\mathbb{P}[\cup_{q \in \mathcal{Q}} \{\mathcal{X}: \mathcal{X} \cap S_{t,q} \neq \emptyset\}]+ \mathbb{P}[x]\\
%&=1+\mathbb{P}[x]-\sum_{x \in \cup_{q \in \mathcal{Q}} S_{t,q}} \mathbb{P}[x],
%\end{array} \hspace{-0.1cm}
\end{align}
%where $\mathbf{x}=\begin{bmatrix} x^T & q^T \end{bmatrix}^T$.
%and we also define $S_t \triangleq \cup_{q \in \mathcal{Q}} S_{t,q}$.
%\begin{rem}
with $\mathbf{x}=(x,q)$. Note that only the final term in \eqref{eq:reward} is essential for the greedy algorithms that we propose. If $\mathbb{P}[x]$ is uniformly distributed on $\mathcal{X}$, then this term is proportional to the size of $\cup_{q\in\mathcal{Q}}S_{t,q}$. The constant is added such that the reward function is non-negative, and is equal to 0 when no action has been taken, i.e., $f(\emptyset,\mathbf{x})=0$. 

%A policy $\pi$ is a function of the observed partial realizations $\psi_t$ to action $v$, i.e., $v_{t+1}=\pi(\psi_t)$, while $\tilde{\mathcal{V}}(\pi,x_0,q_0) \subseteq \mathcal{V}$ is the set of all actions performed under policy $\pi$, when the state and mode pair is $(x_0,q_0)$.
%To assess the uncertainty of the state estimation, \textbf{we define an objective function $f: 2^{\mathcal{V}\times\mathcal{Y}\times\mathcal{Q}}\times\Omega\rightarrow\mathbb{R}_+$ which maps the set of actions $A\subseteq\mathcal{V}$ under state $x_0,q_0$ to reward $f(A,x_0,q_0)$.} We define the strategy as $\pi(\psi_t)$. It chooses the next action $v_{t+1}$ by observing $\psi_t$. $\tilde{\mathcal{V}}(\pi, x_0,q_0)\subseteq\mathcal{V}$ is the set of all the actions by using strategy $\pi$. Generally, $\tilde{\mathcal{V}}(\pi, x_0,q_0)\neq\mathcal{V}$.

We also assume that we are allocated a budget on the maximum number of actions taken, $k\ll |2^\mathcal{V}|$. To achieve the goal, we want to design a policy $\pi^*$ that finds the ``best expected estimate" of the state $x$. 
%This $\pi^*$ should satisfy:
%Formulated in this manner, the active group diagnosis problem is equivalent to the Maximum Coverage Problem in Problem \ref{prob:1}.
\vspace{-0.05cm}
\begin{prob}[Group-Based Active Diagnosis] \label{prob:2}
Find a policy $\pi^*$ that solves Problem \ref{prob:1} with $f$ given in \eqref{eq:reward}.
%The objective of Group-Based Active Diagnosis is equivalent to Adaptive Stochastic Maximization in Problem \ref{prob:1} with $f$ given in \eqref{eq:reward}.
\end{prob}
\vspace{-0.05cm}
\subsubsection{Active diagnosis with persistent faults (special case)}
To consider this problem as a group-based active  diagnosis problem (see also \cite{bellala2012,golovin2010} for a similar characterization), we create groups of objects where each group corresponds to a state and the objects within the group are copies of the state for each possible mode of sensor faults. For example, we consider a system with $|\mathcal{X}|$ states, each denoted by $x \in \mathcal{X}$. If we have $m$ sensors that can be faulty and there are $p$ ways a sensor fault can manifest itself, then we have $(p+1)^m$ possible combinations of functioning and faulty sensor modes, denoted by $q \in \mathcal{Q}$, with cardinality $|\mathcal{Q}|=(p+1)^m$. Next, we define an object as a pair of a state and a mode, denoted as $\mathbf{x}=(x,q) \in \boldsymbol{\mathcal{X}}=\mathcal{X}\times \mathcal{Q}$. %, and the set of objects is formed by the Cartesian product of the set of system states and the set of modes. 
In other words, for each group/state $x$, all pairs of $\{x,q\}_{q\in \mathcal{Q}}$ form the objects corresponding to this group.

Next, to introduce the types of sensor faults we consider, we define the function $\overline{\mu}(v,x_0)$ as the unique outcome under action $v$ and true state $x_0$ when all sensors are healthy and the function $y'=\kappa_q(y)$ as the corruption of the `healthy' sensor outcome $y$ to the `faulty' sensor outcome $y'$ when the failure mode is $q$. Hence, $\mu(v,x,q)=\kappa_q(\overline{\mu}(v,x))$.  %And at each step $t$. %\textbf{ At each step $t$, the probability distribution $\mathbb{P}[x]$ and $\mathbb{P}[q]$ can be updated by conditioning it on $\psi_t$ and get $\mathbb{P}[x\mid\psi_t]$ and $\mathbb{P}[q\mid\psi_t]$.}
%In order to achieve out goal of eliminating the invalid states, 
Further, we define $\overline{D}(y,v)$ with $y=\overline{\mu}(v,x)$ as the set of states $x \in \mathcal{X}$ that gives the same `healthy' sensor measurement $y$ under the action $v$ when $x$ is the actual system state. % of the system and the sensors are all functioning or healthy. 
Thus, $D(y,v,q)=\overline{D}(\kappa_q^{-1}(y),v)$, where the inverse function $\kappa_q^{-1}$ is such that $y'=\kappa_q^{-1}(y)$ implies that $y=\kappa_q(y')$.

Note that the fault function $\kappa_q$ is surjective, whereas $\kappa_q^{-1}$ is an injective partial function, as can be seen with the types of sensor faults we consider in this paper. First, a Type 1 fault is when a faulty sensor persistently outputs the opposite outcome. Another type of sensor fault is when the faulty sensor always gives a constant reading, e.g., `0' or `1' (Type 2 fault). %, where `0' is faulty and `1' is healthy. 
For a single sensor with a Type 1 fault, $y=\kappa_q^{-1}(\kappa_q(y))$, whereas for one with a Type 2 fault, $y \in \kappa_q^{-1}(\kappa_q(y))$ or $\kappa_q^{-1}(\kappa_q(y))=\emptyset$. To further illustrate this, consider a simple example with three sensors: $s_1, s_2, s_3$, where only $s_2$ is faulty, i.e., $q=[1,0,1]$, and the `faulty' sensor measurement is $y=[1, 0, 1]$. %We make an assumption that generators and rectifier units only have two states: \textit{unhealthy} (represented by 0) or \textit{not unhealthy} (either \textit{offline} or \textit{healthy}, represented by 1). Faulty sensors have two types. Type 1 is that the faulty sensor always gives the opposite readings. Type 2 is that the faulty sensor always gives 1. We know $s_2$ is faulty and $q$ represents the sensor state when $s_2$ is faulty. 
If the fault is of Type 1, we have $y'=\kappa_q^{-1}(y)=[1,1,1]$ and $D(y,v,q)=\overline{D}(y',v)$, whereas if the fault is of Type 2 (with persistent outcome `1'), $\kappa_q^{-1}(y)=\{y',y''\}$, with $y'=[1,0,1]$ and $y''=[1,1,1]$, and $D(y,v,q)=\overline{D}(y',v)\cup \overline{D}(y'',v)$. On the other hand, if $s_2$ is faulty of Type 2 (with persistent outcome `0') and if $y_2$ should always be 1 for all $q$, then $y'=\kappa_q^{-1}(y)$ is undefined and by definition, $D(y,v,q)=\emptyset$.

%\vspace{-0.05cm}
\subsection{Adaptive Greedy Policy: Group-Based Active Diagnosis}
\vspace{-0.05cm}
The optimal policy for Problem \ref{prob:2} may only be obtained by a clairvoyant algorithm. Moreover, the complexity of planning ahead for $k$ steps scales exponentially with $k$. Hence, we resolve to find a scalable (polynomial-time) algorithm using an adaptive greedy strategy, i.e., to select the action at time $t+1$ that maximizes the expected one-step reward given the available information $\psi_t$, denoted  by $\Delta (v|\psi_t)$ and given in Definition \ref{def:marginGain}, %:
%\begin{align} \label{eq:greedy1}
%\begin{array}{rl}
%v_{t+1} &\in \displaystyle\arg\max_{v \in \mathcal{V}} \Delta (v|\psi_t)\\
%&=\displaystyle\arg\max_{v \in \mathcal{V}} \mathbb{E}[f(v_{1:t}\cup \{v\},\mathbf{X})-f(v_{1:t},\mathbf{X}) | \psi_t]
%\end{array}
%\end{align} 
where the expectation is taken with respect to $\mathbb{P}[x,q|\psi_t]$. This probability measure $\mathbb{P}[x,q|\psi_t]$ on the set $S_{t,q}$ for each $q \in \mathcal{Q}$ can be obtained via Bayes' rule:
\begin{align}
\mathbb{P}[x,q|\psi_t]&= \frac{\mathbb{P}[\psi_t |x,q] \mathbb{P}[x,q]}{\mathbb{P}[\psi_t]}=\begin{cases}
\frac{\mathbb{P}[x,q]}{\mathbb{P}[\psi_t]} \hspace{0.35cm} \forall x \in S_{t,q},\\
0 \hspace{.85cm} otherwise.
\end{cases} \hspace{-0.2cm}\label{eq:condP}
\end{align}
Note that we substituted $\mathbb{P}[\psi_t|x,q]=1$ if $x \in S_{t,q}$ because the measurement process is deterministic, and $\mathbb{P}[\psi_t|x,q]=0$ otherwise. Moreover, since $\sum_{q \in \mathcal{Q}} \sum_{x \in S_{t,q}} \mathbb{P}[x,q|\psi_t]=1$, the normalization term $\mathbb{P}[\psi]$ can be computed as
\begin{align}\label{eq:normalization}
\mathbb{P}[\psi_t]=\sum_{q \in \mathcal{Q}} \sum_{x \in S_{t,q}} \mathbb{P}[x,q].
\end{align}\vspace{-0.25cm}

Thus, at each iteration or step $t$,  the policy consists of choosing the next best action $v_{t+1}$ that maximizes the gain in uncertainty reduction as defined in Definition \ref{def:marginGain}, which can be rewritten for the reward function in \eqref{eq:reward} as
%Thus, for the subgroup reward function, $f_{sub}(v_{1:t}\cup \{v\},\mathbf{x})$, the greedy strategy in \eqref{eq:greedy1} can be rewritten as
\begin{align*} 
\begin{array}{ll}
 v_{t+1} &\in 
\displaystyle\arg\max_{v \in \mathcal{V}}  \frac{1}{\mathbb{P}[\psi_t]}  \displaystyle\sum_{q \in \mathcal{Q}} \sum_{x \in S_{t,q}} \mathbb{P}[x,q]\\ %\mathbb{P}[x]\mathbb{P}[q|x]\\
&  \quad (\hspace{-0.1cm}\displaystyle\sum_{\tilde{x} \in \cup_{\tilde{q} \in \mathcal{Q}} S_{t,\tilde{q}}} \mathbb{P}[\tilde{x}]-\hspace*{-0.2cm}\sum_{\tilde{x} \in \cup_{\tilde{q} \in \mathcal{Q}} S_{t,\tilde{q}} \cap D(\mu(v,x,q),v,\tilde{q})} \hspace{-0.1cm}\mathbb{P}[\tilde{x}] \, ),%\\
%&=\displaystyle\arg\max_{v \in \mathcal{V}}  \frac{1}{\mathbb{P}[\psi_t]} (\displaystyle\sum_{\tilde{x} \in \cup_{q \in \mathcal{Q}} S_{t,q}} \mathbb{P}[\tilde{x}] \\
%&  \ -\displaystyle\sum_{q \in \mathcal{Q}} \sum_{x \in S_{t,q}}  \mathbb{P}[x,q] \hspace{-0.1cm}\sum_{\tilde{x} \in \cup_{\tilde{q} \in \mathcal{Q}} S_{t,\tilde{q}} \cap D(\mu(v,x,q),v)} \hspace{-0.1cm} \mathbb{P}[\tilde{x}] \ ),
 \end{array}
\end{align*} 
where $\mathbb{P}(\psi_t)=\displaystyle\sum_{q \in \mathcal{Q}} \sum_{x \in S_{t,q}} \mathbb{P}[x,q]$. Since the first term does not depend on the choice of action $v$, the above expression can be equivalently written as
\begin{align} \label{eq:greedysub}
\hspace{-0.2cm}v_{t+1}&\in 
\displaystyle\arg\min_{v \in \mathcal{V}} \displaystyle\sum_{q \in \mathcal{Q}} \sum_{x \in S_{t,q}} \mathbb{P}[x,q] \hspace{-0.5cm}\sum_{\substack{\tilde{x} \in \bigcup_{\tilde{q} \in \mathcal{Q}} (S_{t,\tilde{q}} \\ \qquad \cap D(\mu(v,x,q),v,\tilde{q}))}} \hspace{-0.3cm} \mathbb{P}[\tilde{x}].
\end{align}

In addition, a less computationally expensive algorithm can be obtained by further transforming \eqref{eq:greedysub} to
\begin{align} \label{eq:greedy}
\hspace{-0.2cm}v_{t+1}&\in 
\displaystyle\arg\min_{v \in \mathcal{V}} \sum_{y \in \mathcal{Y}} \hspace{-0.15cm}\sum_{\substack{\tilde{x} \in \bigcup_{\tilde{q} \in \mathcal{Q}} (S_{t,\tilde{q}} \\ \qquad \cap D(y,v,\tilde{q}))}} \hspace{-0.35cm} \mathbb{P}[\tilde{x}] \sum_{q \in \mathcal{Q}} \hspace{-0.35cm}\sum_{\substack{x \in S_{t,q} \cap\\ \qquad  D(y,v,q)}} \hspace{-0.3cm}\mathbb{P}[x,q] \hspace{-0.2cm}
\end{align}
by using the fact that the sets $\{D(y,v,q)\ |\ y \in \mathcal{Y}\}$ form a partition of $\mathcal{X}$ as will be described in more detail in the proof of Lemma \ref{lem:Delta} in Appendix \ref{app:proofs}. %This version of the adaptive greedy policy is provided in Algorithm \ref{algorithm:greedy}.

\vspace{-0.05cm}
\subsection{Near-Optimality Performance Guarantees}
\vspace{-0.05cm}
To derive performance guarantees for the group-based active diagnosis problem with the reward function $f(v_{1:t},x,q)$ in \eqref{eq:reward}, we first show that $f$ satisfies the following (which will be proven in Appendix \ref{app:proofs}).
\begin{prop}[Adaptive Monotonicity] \label{prop:monotone}
The reward function $f(v_{1:t},x,q)$ in \eqref{eq:reward} is adaptive monotone.
\end{prop}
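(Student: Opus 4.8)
The plan is to establish a \emph{pointwise} monotonicity of the reward in \eqref{eq:reward}, which is stronger than the expected monotonicity demanded by Definition \ref{def:monotone} and immediately implies it. The central observation is that the compatible-state sets $S_{t,q}$ are defined as intersections, so performing an additional action can only intersect each $S_{t,q}$ with one more set $D(\cdot,\cdot,q)$, thereby (weakly) shrinking each of them and hence their union; since $f$ increases exactly when this union's probability mass decreases, the marginal gain is nonnegative for every realization.

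Concretely, I would first note that the constant and the initial summand in \eqref{eq:reward} do not depend on the action set, so the marginal gain of an action reduces to the change in the mass $\sum_{x \in \bigcup_{q} S_{t,q}} \mathbb{P}[x]$. Fixing any realization $\mathbf{x}=(x,q)$ that generates the outcomes, any partial realization $\psi_t$, and any action $v=v_{t+1}$, the expression $S_{t,q'}=\bigcap_{i=1}^{t} D(\mu(v_i,x,q),v_i,q')$ for each hypothesis mode $q'\in\mathcal{Q}$ gives, after appending the new action, $S_{t+1,q'} = S_{t,q'}\cap D(\mu(v,x,q),v,q') \subseteq S_{t,q'}$. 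Taking unions over $q'$ preserves the inclusion, so $\bigcup_{q'} S_{t+1,q'} \subseteq \bigcup_{q'} S_{t,q'}$, and by monotonicity of the measure, $\sum_{x \in \bigcup_{q'} S_{t+1,q'}} \mathbb{P}[x] \leq \sum_{x \in \bigcup_{q'} S_{t,q'}} \mathbb{P}[x]$. Substituting back into \eqref{eq:reward} yields $f(v_{1:t}\cup\{v\},\mathbf{x}) \geq f(v_{1:t},\mathbf{x})$ for \emph{every} realization $\mathbf{x}$.

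Finally, I would conclude by invoking Definition \ref{def:marginGain}: since the integrand $f(v_{1:t}\cup\{v\},\mathbf{X})-f(v_{1:t},\mathbf{X})$ is nonnegative for every outcome of $\mathbf{X}$, its conditional expectation $\Delta(v|\psi_t)$ taken with respect to $\mathbb{P}[x,q|\psi_t]$ is also nonnegative, which is precisely the condition in Definition \ref{def:monotone}.

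Rather than a genuine obstacle, the only point requiring care is the dependence of the observed outcomes $\mu(v_i,x,q)$ on the realization $\mathbf{x}$; the pointwise argument dispenses with this cleanly, because the inclusion $S_{t+1,q'}\subseteq S_{t,q'}$ holds for whatever outcomes arise, so no property of $\mathbb{P}[x,q]$ is needed beyond nonnegativity. The degenerate case in which $v$ repeats an earlier action (or produces an outcome already consistent with all surviving states) simply gives $S_{t+1,q'}=S_{t,q'}$ and hence a marginal gain of exactly zero, which still satisfies $\Delta(v|\psi_t)\geq 0$.
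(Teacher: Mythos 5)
Your proof is correct, but it takes a genuinely different route from the paper's. You establish \emph{pointwise} monotonicity of $f$ directly from the definition of $S_{t,q}$ as an intersection: appending an action gives $S_{t+1,q'}=S_{t,q'}\cap D(\mu(v,x,q),v,q')\subseteq S_{t,q'}$ for every hypothesis mode $q'$ and every realization $(x,q)$, so the union shrinks, its mass decreases, and the integrand in Definition~\ref{def:marginGain} is nonnegative before any expectation is taken. The paper instead derives monotonicity as a corollary of Lemma~\ref{lem:Delta}, which expresses $\Delta(v|\psi_t)$ in the closed form $\sum_{i}\zeta_i\tau_i-\bigl(\sum_i\zeta_i\tau_i^2\bigr)/\bigl(\sum_i\tau_i\bigr)$ and then rewrites this as $\bigl(\sum_i \zeta_i\tau_i\sum_{j\neq i}\tau_j\bigr)/\bigl(\sum_i\tau_i\bigr)\geq 0$ using $\zeta_i\geq 1$, $\tau_i\geq 0$. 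Your argument is more elementary and self-contained --- it needs no partition machinery, proves a strictly stronger (pointwise, distribution-free) statement, and would survive any change of prior --- whereas the paper's route is essentially free given that Lemma~\ref{lem:Delta} must be developed anyway for the weak adaptive submodularity proof in Proposition~\ref{prop:submodular}, where the pointwise argument would not suffice. Both are valid; your handling of the repeated-action and zero-gain edge cases is also consistent with Definition~\ref{def:monotone}.
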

%\begin{proof}
%This follows directly since $\Delta (v|\psi_t)$ can be expressed with the function $b$ in \eqref{eq:b}.
%\end{proof}

\begin{prop}[$\zeta$-Weak Adaptive Submodularity] \label{prop:submodular}
The reward function $f(v_{1:t},x,q)$ in \eqref{eq:reward} is $\zeta$-weakly adaptive submodular with 
\begin{align}
1\leq \zeta \leq \overline{\zeta}  \leq \frac{|\mathcal{Q}|}{\min_{\{x \in \mathcal{X}, q \in \mathcal{Q}: \mathbb{P}[x,q]>0\}}\mathbb{P}[x,q]}, \label{eq:zeta}
\end{align} 
where $\zeta$ and ${\overline{\zeta}}$ are defined as
\begin{align*}
{\zeta} \triangleq \max_{\substack{v_{1:k} \in \mathcal{V}^k, \, v \in \mathcal{V},\\y \in \mathcal{Y}, \, t=1,\hdots,k}} \frac{\sum_{x \in \bigcup_{{q} \in \mathcal{Q}} S_{t,{q}}\cap D(y,v,{q})} \mathbb{P}[x]}{\sum_{q\in \mathcal{Q}} \sum_{x\in S_{t,q}\cap D(y,v,q)} \mathbb{P}[x,q]},\\
{\overline{\zeta}} \triangleq \max_{\substack{v_{1:k} \in \mathcal{V}^k, \, v \in \mathcal{V},\\y \in \mathcal{Y}, \, t=1,\hdots,k}} \frac{\sum_{q \in \mathcal{Q}}\sum_{x \in S_{t,{q}}\cap D(y,v,q)} \mathbb{P}[x]}{\sum_{q\in \mathcal{Q}} \sum_{x\in S_{t,q} \cap D(y,v,q)} \mathbb{P}[x,q]}.
\end{align*}
\end{prop}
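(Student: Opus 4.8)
The plan is to reduce the two-realization inequality of Definition~\ref{def:submodular} to a single worst-case ratio of marginal (state) mass to joint (state--mode) mass, which is exactly $\zeta$. First I would put the conditional expected marginal benefit into a transparent closed form. Using the partition property of $\{D(y,v,q)\ :\ y\in\mathcal Y\}$ for fixed $(v,q)$ from Lemma~\ref{lem:Delta}, the greedy objective \eqref{eq:greedy} lets me rewrite $\Delta(v|\psi_t)$ as a posterior-weighted average over the possible outcomes of $v$. Abbreviating the (unnormalized) probability of observing $y$ by $W_{t,y}\triangleq\sum_{q\in\mathcal Q}\sum_{x\in S_{t,q}\cap D(y,v,q)}\mathbb P[x,q]$, the surviving state mass by $R_{t,y}\triangleq\sum_{x\in\bigcup_{q}(S_{t,q}\cap D(y,v,q))}\mathbb P[x]$, and the current uncertainty by $U_t\triangleq\sum_{x\in\bigcup_{q}S_{t,q}}\mathbb P[x]$, I would record $\sum_{y}W_{t,y}=\mathbb P[\psi_t]$ together with
\begin{align*}
\Delta(v|\psi_t)=\frac{1}{\mathbb P[\psi_t]}\sum_{y\in\mathcal Y}W_{t,y}\,(U_t-R_{t,y}),
\end{align*}
so $\Delta(v|\psi_t)$ is the expected eliminated state mass, outcome $y$ being weighted by $W_{t,y}$ and removing mass $U_t-R_{t,y}\ge 0$ (nonnegativity from $R_{t,y}\le U_t$, consistent with Proposition~\ref{prop:monotone}).

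Next I would fix $\psi_t\subseteq\psi_{t'}$ and $v\in\mathcal V\setminus v_{1:t'}$ and use the subrealization relation, which gives $S_{t',q}\subseteq S_{t,q}$ for every $q$, hence $\bigcup_q S_{t',q}\subseteq\bigcup_q S_{t,q}$ and $U_{t'}\le U_t$, while each set $D(y,v,q)$ is unchanged. The goal is to bound $\Delta(v|\psi_{t'})$ from above and $\Delta(v|\psi_t)$ from below so that their quotient is controlled by the per-outcome ratio $R_{t,y}/W_{t,y}$: the amount of state mass a single outcome $y$ can remove is a marginal quantity (the numerator of $\zeta$), whereas the probability that this eliminating outcome is actually observed is the joint quantity $W_{t,y}$ (its denominator). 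Charging each elimination against the outcome that produces it and taking the worst case over $(v_{1:k},v,y,t)$ should yield $\Delta(v|\psi_{t'})\le\zeta\,\Delta(v|\psi_t)$ with $\zeta=\max_{}R_{t,y}/W_{t,y}$, establishing the property.

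I would then prove the chain \eqref{eq:zeta} directly. For $\zeta\ge 1$, note that for a fixed state the modes consistent with it contribute at most its marginal mass, $\sum_{q:\,x\in S_{t,q}\cap D(y,v,q)}\mathbb P[x,q]\le\mathbb P[x]$, and each state is counted once in the union defining $R_{t,y}$; summing over states gives $W_{t,y}\le R_{t,y}$. For $\zeta\le\overline\zeta$, observe that $R_{t,y}$ counts each state once, whereas the numerator of $\overline\zeta$ sums $\mathbb P[x]$ over all pairs $(x,q)$ with $x\in S_{t,q}\cap D(y,v,q)$, double counting states shared by several mode hypotheses, while the denominators coincide. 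For $\overline\zeta\le|\mathcal Q|/\min_{\mathbb P[x,q]>0}\mathbb P[x,q]$, I would bound the numerator by $\sum_{q\in\mathcal Q}\sum_{x\in\mathcal X}\mathbb P[x]=|\mathcal Q|$ and the denominator below by $\min_{\mathbb P[x,q]>0}\mathbb P[x,q]$, since at least the true, positive-probability pair contributes a term.

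The hard part will be the comparison step. Because a state may remain compatible at $\psi_t$ only through a mode hypothesis that is subsequently eliminated by $\psi_{t'}$, observing $y$ can remove strictly more state mass at $t'$ than at $t$; this coupling across modes is precisely why $f$ is not (strongly) adaptive submodular and why $\zeta>1$ is generally unavoidable. The delicate accounting is that the outcome weights $W_{t',y}$ and $W_{t,y}$ differ and that states newly eliminated at $t'$ have no counterpart in the $t$-elimination, so they cannot be charged against $U_t-R_{t,y}$ directly but only against the surviving mass through the ratio $R_{t,y}/W_{t,y}$. Making this bookkeeping exact and uniform over all $\psi_t\subseteq\psi_{t'}$ — so that the worst single-configuration ratio $\zeta$ indeed dominates the worst two-realization ratio — is the crux of the argument.
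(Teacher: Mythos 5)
Your setup is sound: the closed form $\Delta(v|\psi_t)=\frac{1}{\mathbb P[\psi_t]}\sum_{y}W_{t,y}\,(U_t-R_{t,y})$ agrees with the paper's expression \eqref{eq:deltasup} (with $W_{t,y}=\tau_y$, $R_{t,y}=\zeta(\psi_t,y)\tau_y$ and $U_t=\sum_y R_{t,y}$), and your three bounds in the chain \eqref{eq:zeta} are exactly the paper's arguments (the inequality $W_{t,y}\le R_{t,y}$ is \eqref{eq:inEqZeta}, and the bounds $\zeta\le\overline\zeta$ and $\overline\zeta\le|\mathcal Q|/\min\mathbb P[x,q]$ are obtained the same way). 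However, there is a genuine gap: the central inequality $\Delta(v|\psi_{t'})\le\zeta\,\Delta(v|\psi_t)$ is never actually established. You describe a ``charging'' scheme and then concede that making it exact is ``the crux of the argument''; that crux is the whole proposition. In particular, your sketch does not address the two real obstructions: the normalizations $\mathbb P[\psi_{t'}]$ and $\mathbb P[\psi_t]$ differ, and increasing the per-outcome ratio $R/W$ is entangled with shrinking the weights $W$, so a per-outcome comparison of $W_{t',y}(U_{t'}-R_{t',y})$ against $W_{t,y}(U_t-R_{t,y})$ does not go through term by term.

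The paper closes this gap with a sandwich-plus-monotonicity argument that you could adopt. Writing $\Delta(v|\psi_t)=\sum_y\zeta_y\tau_y-\frac{\sum_y\zeta_y\tau_y^2}{\sum_y\tau_y}$, one checks $\partial\Delta/\partial\zeta_y=\tau_y\sum_{j\ne y}\tau_j/\sum_j\tau_j\ge 0$, so replacing every $\zeta_y$ by $1$ gives the lower bound $\Delta(v|\psi_t)\ge b(\tau)$ and replacing every $\zeta'_y$ by the global maximum $\zeta$ gives the upper bound $\Delta(v|\psi_{t'})\le\zeta\,b(\tau')$, where $b(\tau)=\sum_y\tau_y-\frac{\sum_y\tau_y^2}{\sum_y\tau_y}$. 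The remaining step is $b(\tau')\le b(\tau)$, which follows from $S_{t',q}\subseteq S_{t,q}$ (hence $\tau'_y\le\tau_y$ for every $y$) together with the fact that $b$ is increasing on the positive orthant --- this last fact is itself nontrivial (increasing one $\tau_y$ also increases the subtracted term) and is imported as Lemma~\ref{lem:b} from \cite{maillet2013}. Chaining $\Delta(v|\psi_{t'})\le\zeta b(\tau')\le\zeta b(\tau)\le\zeta\Delta(v|\psi_t)$ completes the proof. Without some substitute for the monotonicity of $b$ and for the decoupling of the $\zeta_y$'s from the $\tau_y$'s, your charging heuristic cannot be made rigorous as stated.
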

%\begin{proof}
%This follows directly from Lemma \ref{lem:b}.
%\end{proof}
Both ${\zeta}$ and ${\overline{\zeta}}$ can be computed algorithmically, but since we are interested in a smaller $\zeta$, we only provide an algorithm for computing ${\zeta}$ in Algorithm \ref{algorithm:zeta}. ${\overline{\zeta}}$ is of interest because it can be easily derived analytically for some special cases. Specifically, for a group-based active diagnosis problem with uniform $\mathbb{P}[x,q]$ for each $x \in \mathcal{X}$, which is the case for active sensing with no informative priors on the sensor faults, we have $\mathbb{P}[x]=|\mathcal{Q}| \mathbb{P}[x,q]$. In this case, ${\overline{\zeta}}=|\mathcal{Q}|$. Note that these factors are only required for finding the performance guarantee, and are not needed for the adaptive greedy strategy we propose.

\begin{algorithm}[t]
\caption{Computation of $\zeta$}
\label{algorithm:zeta}
\begin{algorithmic}[1]
\State Initialize $\zeta=0$ and $S_{0,q}=\mathcal{X}$ for all $q \in \mathcal{Q}$
\For{each action sequence $v_{1:k} \in \mathcal{V}^k$}
\For{$t \in \{1,\hdots,k\}$}
\State Perform $v_t$ and measure $y_t=\mu(v_t,x_0,q_0)$
\State $S_{t,q}=S_{t-1,q} \cap D(y_t,v_t,q)$ for all $q \in \mathcal{Q}$
\For{each action $v \in \mathcal{V}$}
\For{each output $y\in \mathcal{Y}$}
\State $\overline{S}_{t,q}=S_{t,q} \cap D(y,v,q)$ for all $q \in \mathcal{Q}$
\State $\overline{S}_t=\bigcup_{q\in \mathcal{Q}}\overline{S}_{t,q}$
\State $\zeta_{t,v,y}=\frac{\sum_{x \in \overline{S}_{t}} \mathbb{P}[x]}{\sum_{q\in \mathcal{Q}} \sum_{x\in \overline{S}_{t,q}} \mathbb{P}[x,q]}$
\State $\zeta \leftarrow \max\{\zeta,\zeta_{t,v,y}\}$
\EndFor
\EndFor
%\State $\psi_t=\psi_{t-1}\cup(v_0,y_0)$
\EndFor
\EndFor
\State \Return{$\zeta$}
%\State Compare the values of $f$
\end{algorithmic}
\end{algorithm}

Since the group-based active diagnosis problem (Problem \ref{prob:2}) is equivalent to the active diagnosis problem  with an adaptive monotone and weakly adaptive submodular reward function, by Propositions \ref{prop:monotone} and \ref{prop:submodular}, as well as Theorem \ref{thm:greedy1}, we have the following performance guarantee. 
%\subsubsection{Adaptive Stochastic Maximization}

\begin{thm}
For any true state ${x} \in {\mathcal{X}}$ and true mode $q \in \mathcal{Q}$, the adaptive greedy policy $\pi^{greedy}_k$  guarantees that 
$$f_{avg}(\pi^{greedy}_{k}) > (1- e^{-1/\zeta}) f_{avg}(\pi^*_k)$$
for the group-based active diagnosis problem with the reward function in \eqref{eq:reward}, 
where $f_{avg}(\pi^*_k)$ is what can be achieved in $k$ steps by any other policy, including the optimal policy and $\zeta$ is given in \eqref{eq:zeta}. %$1\leq \zeta \leq \frac{|\mathcal{Q}|}{\min\limits_{x \in \mathcal{X}, q \in \mathcal{Q}: \mathbb{P}[x,q]>0}\mathbb{P}[x,q]} $.%\\
%\phantom{bl} Moreover, no polynomial time approximation algorithm can outperform the adaptive greedy strategy. 
\end{thm}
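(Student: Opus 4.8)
The plan is to obtain this statement as an immediate corollary of Theorem~\ref{thm:greedy1}, once the group-based reward function \eqref{eq:reward} has been shown to meet the two structural hypotheses of that theorem. Since Problem~\ref{prob:2} is by definition Problem~\ref{prob:1} instantiated with the particular reward $f(v_{1:t},x,q)$ of \eqref{eq:reward}, the only thing that needs checking is that this specific $f$ is adaptive monotone and $\zeta$-weakly adaptive submodular; everything else is inherited from the general performance guarantee.

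First I would invoke Proposition~\ref{prop:monotone} to conclude that $f$ is adaptive monotone, and Proposition~\ref{prop:submodular} to conclude that $f$ is $\zeta$-weakly adaptive submodular with $\zeta$ bounded as in \eqref{eq:zeta}. These two facts are exactly the premises ``adaptive monotone and $\zeta$-weakly adaptive submodular $f$'' required by Theorem~\ref{thm:greedy1}. I would then apply Theorem~\ref{thm:greedy1} with the action budget equal to $k$ and the number of greedy iterations set to $\ell=k$, so that the greedy policy $\pi^{greedy}_k$ and the comparison policy $\pi^*_k$ both spend $k$ actions. Substituting $\ell=k$ into the bound $f_{avg}(\pi^{greedy}_\ell) > (1-e^{-\ell/\zeta k}) f_{avg}(\pi^*_k)$ collapses the exponent to $-k/(\zeta k)=-1/\zeta$, yielding $f_{avg}(\pi^{greedy}_k) > (1-e^{-1/\zeta}) f_{avg}(\pi^*_k)$, which is the claimed inequality. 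The quantifier ``for any true state $x$ and true mode $q$'' is handled automatically, since $f_{avg}$ already averages over $\mathbb{P}[x,q]$ and the guarantee of Theorem~\ref{thm:greedy1} holds uniformly over realizations.

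Because the final statement is a specialization, I expect no genuine obstacle in this last step itself; the substantive work lives in the two propositions it cites. The delicate part is Proposition~\ref{prop:submodular}: one must bound the ratio $\Delta(v|\psi_{t'})/\Delta(v|\psi_t)$ uniformly over subrealizations $\psi_t \subseteq \psi_{t'}$ and over actions $v$, and translate it into the closed-form factor $\zeta$ of \eqref{eq:zeta}. I would expect that argument to rest on rewriting the marginal benefit via the partition property of $\{D(y,v,q) : y \in \mathcal{Y}\}$ used in \eqref{eq:greedy}, together with the observation that shrinking the compatible sets $S_{t,q}$ as $t$ grows can change the numerator and denominator of that ratio only in controlled ways, giving the worst-case factor $\overline{\zeta} \leq |\mathcal{Q}|/\min \mathbb{P}[x,q]$. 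Verifying adaptive monotonicity (Proposition~\ref{prop:monotone}) should be comparatively routine, since each additional action can only remove states from $\cup_{q} S_{t,q}$ and hence can only increase $f$.
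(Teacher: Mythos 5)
Your proposal is correct and matches the paper's own derivation exactly: the paper obtains this theorem as an immediate corollary of Theorem~\ref{thm:greedy1} combined with Propositions~\ref{prop:monotone} and~\ref{prop:submodular}, specializing the greedy iteration count to $\ell=k$ so that the exponent collapses to $-1/\zeta$. As you observe, the substantive work lives in the two propositions, and no additional argument is required at this final step.
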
\vspace{-0.15cm}

Hence, in the absence of informative priors on the sensor faults (i.e., when ${\overline{\zeta}}=|\mathcal{Q}|$), our adaptive greedy policy that selects $k$ actions obtains at least $(1-e^{-1/|\mathcal{Q}|})$ of the value of the optimal strategy that selects $k$ actions. 
To our best knowledge, this result is novel. Group identification for the complementary problem of adaptive stochastic minimum cost cover, a.k.a. Bayesian active learning has been considered in \cite{golovin2010,bellala2012,javdani2014}, but not the group-based active diagnosis problem we consider in this paper. Interestingly, the performance bounds given in \cite{golovin2010,bellala2012} have an additional factor of 2, which is equal to $\zeta=|\mathcal{Q}|=2$ since they had exactly 2 modes for each persistent sensor fault, while \cite{javdani2014} also has a factor that resembles $\zeta$.

%In fact, our approach may also be applicable for the complementary problem, which is an ongoing subject of our research. The advantage of this new result would be the justification for using the adaptive greedy policy without the need for proxies and corresponding new algorithms as in \cite{golovin2010} and \cite{bellala2012}.

% and \cite[Lemma A.14]{golovin2011}, respectively

%\section{Implementation Details}
%\subsection{Lazy Evaluations}
\vspace{-0.05cm}
\section{Examples}

%To see how the adaptive greedy policy in Algorithm \ref{algorithm:greedy} performs, 
We tested the adaptive greedy policy %in Algorithm \ref{algorithm:greedy}  
on two electric power system circuits---small and large circuits. For these experiments, the state $x$ is uniformly distributed on $\mathcal{X}$, and for any sensor measurement that is prone to faults, $q_i$ for all $i=1,\hdots,|\mathcal{Q}|$, the conditional probabilities that faults of Type 1 and Type 2 (with outcome `stuck' at `1') persistently occur are $\mathbb{P}[q^1_{i}|x]=0.2$ and $\mathbb{P}[q^2_{i}|x]=0.4$, respectively. %If sensor $i$ is not prone to faults, then $\mathbb{P}[q_{(i)}|x]=1$. 
The conditional probability for each state and mode pair is then $\mathbb{P}[q| x]=\prod_{i=1}^{|\mathcal{Q}|} \mathbb{P}[q_{i}|x]$. For simplicity, we only allow two states for the system components, i.e., they are either \textit{healthy} or \textit{faulty}. Correspondingly, the sensor readings can only take two values, i.e., \textit{proper voltage} or \textit{improper voltage}. We do not assume an initial action $v_0$, so we initialize our experiments with $S_{0,q}=\mathcal{X}$ for each $q$. %We assume there are two fail types that cannot overlap. Type 1 gives the opposite readings of the correct ones. Type 2 gives the constant readings of \textit{not improper voltage}.

For many practical cases, it is not possible to reduce the size of the compatible states to one because the number and positioning of sensors are often limited, and more so when the sensors can be faulty. Thus, we compare the performance of our adaptive greedy policy with the results of a brute force policy that exhaustively takes every action $v \in \mathcal{V}$ (thus, violating the given budget on the number of actions). The brute force policy represents the best achievable outcome, which we use as a benchmark. %The brute force strategy takes every action $v\in\mathcal{V}$. 
%If the results of the two strategies are the same, if means the greedy strategy gets the best results that the estimation can achieve. But brute force strategy is usually not practical because a large circuit will take long time to take all the actions. Our simulation is done without initial configuration. The test methodology is described below. We also need to decide the state of sensors and which fault type they have, and simulate on different cases.
Both policies are run in Python on an \textit{Intel\textregistered \ Core$^{TM}$} i5-4300U 64-bit CPU@1.90GHz 2.50GHz with 8.00GB RAM, and the experiments are performed for all possible system states $x_0 \in \mathcal{X}$ and sensor modes $q_0 \in \mathcal{Q}$.

%\begin{algorithm}
%\caption{Test methodology}
%\label{CHalgorithm}
%\begin{algorithmic}[1]
%\State Set the initial state of sensors $q_0\in\mathcal{Q}$
%\For{each state $x_0\in\Omega$}
%\State Set the circuit in the state $x_0$
%\State Run the greedy strategy
%\State Record computation time and the value of $f$
%\State Run the brute force strategy
%\State Record the value of $f$
%\EndFor
%\State Compare the values of $f$
%\end{algorithmic}
%\end{algorithm}
\vspace{-0.125cm}
\subsection{Tests on a small-size circuit}
\label{Tests on a small-size circuit}
\vspace{-0.05cm}
We first tested the proposed adaptive greedy policy on a small circuit with 13 components, as shown in Figure \ref{fig:simple}. By using four controllable contactors $\{C1, C3, C4, C6\}$ (thus, $|\mathcal{V}|=2^4=16$ actions) and observing the measurements of sensors $\{S1, S2, S3\}$, which may be faulty,  the objective is to estimate the unknown states of the components $\{G1, G2, R1, R2, C2, C5\}$. %by Using four controllable contactors $(C1, C3, C4, C6)$ By observing the readings of sensors $(S1, S2, S3)$ under certain sensor states. Based on the controllable contactors, we can take mostly $\mid\mathcal{V}\mid=2^4=16$ actions. 

\textit{Simulation result}: The adaptive greedy policy that takes 6 actions performs equally well as the brute force policy that takes all 16 actions, i.e., the values of the objective function $f$ are equal for all possible states $x_0$ and failure modes $q_0$.

\subsubsection{Average execution time} We recorded the average execution times to find the next best action, and the results are shown in Figure \ref{fig:execution} for two representative examples for different true sensor modes $q$. %We do not show all the graphs under all sensor states. Fig. 2 gives the result when all the sensors are healthy. Fig. 3 is the case when $S2$ is faulty with type 1. Fig. 4 gives the result when $S1, S2$ are faulty with type 1 and $S3$ are faulty with type 2. 
It takes about 25 milliseconds to compute the next best action when all the sensors are healthy, and we observed that the average execution time increases linearly in the number of modes $|\mathcal{Q}|=2^m$, i.e., exponentially with the number of fault-prone sensors $m$. %With one more faulty sensor, either with type 1 or type 2, the execution time doubles.

%\begin{figure}[H]
%\centering
%\includegraphics[width=0.5\textwidth]{Figures/2.png} 
%
%\caption{Histogram of execution time for the greedy strategy with all sensors healthy}
%\label{Histogram of execution time for the greedy strategy with all sensors healthy}
%\end{figure}

\begin{figure}[!tp]
\centering
\subfigure[$S2$ is fault-prone of Type 1 and $S1, S3$ are healthy. ]{
  \includegraphics[width=0.231\textwidth,trim= 0mm 0mm 2mm 3mm,clip]{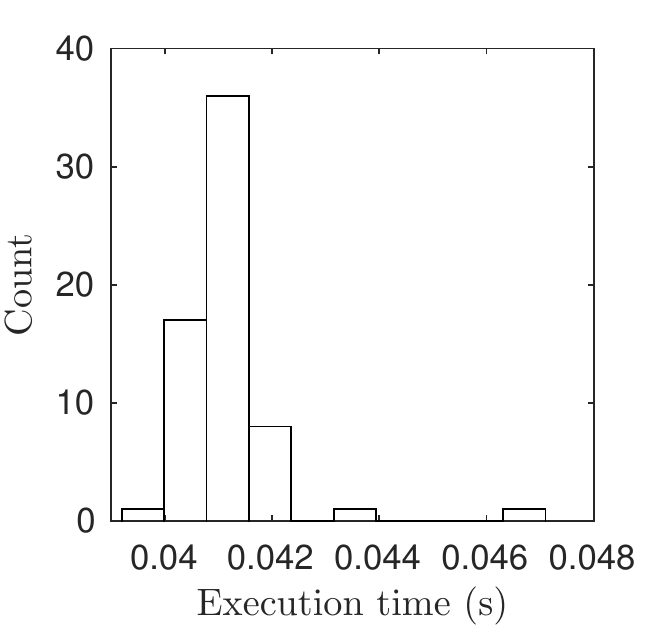} \label{fig:S2}}
  \subfigure[$S1, S2$ are fault-prone of Type 1 and $S3$ of Type 2.]{
  \includegraphics[width=0.218\textwidth,trim= 0mm 0mm 5mm 3mm,clip]{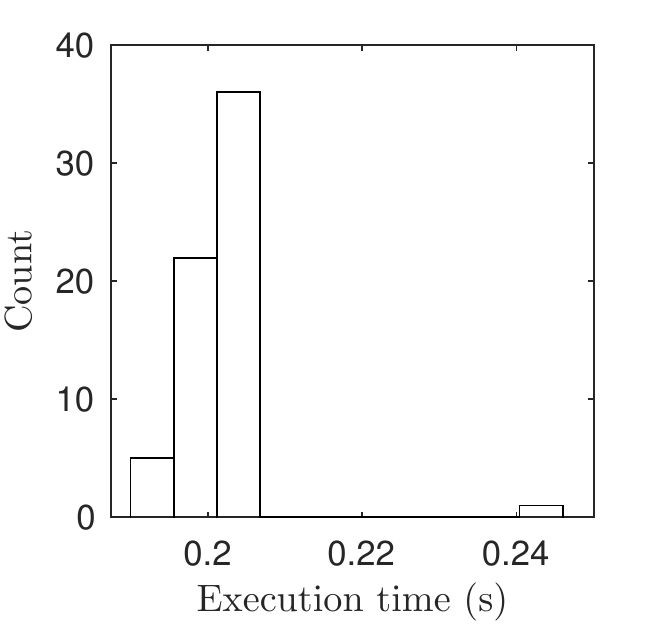} \label{fig:all}}
  
  \caption{Histogram of execution times (averaged over 6 actions) of the adaptive greedy policy for all true states $x_0$.} \label{fig:execution}
\end{figure}

\begin{figure}[!tp]
\centering
%\subfigure[$S2$ is fault-prone of Type 1 and $S1, S3$ are healthy. ]
{
  \includegraphics[width=0.49\textwidth,trim= 7mm 0mm 12.5mm 0mm,clip]{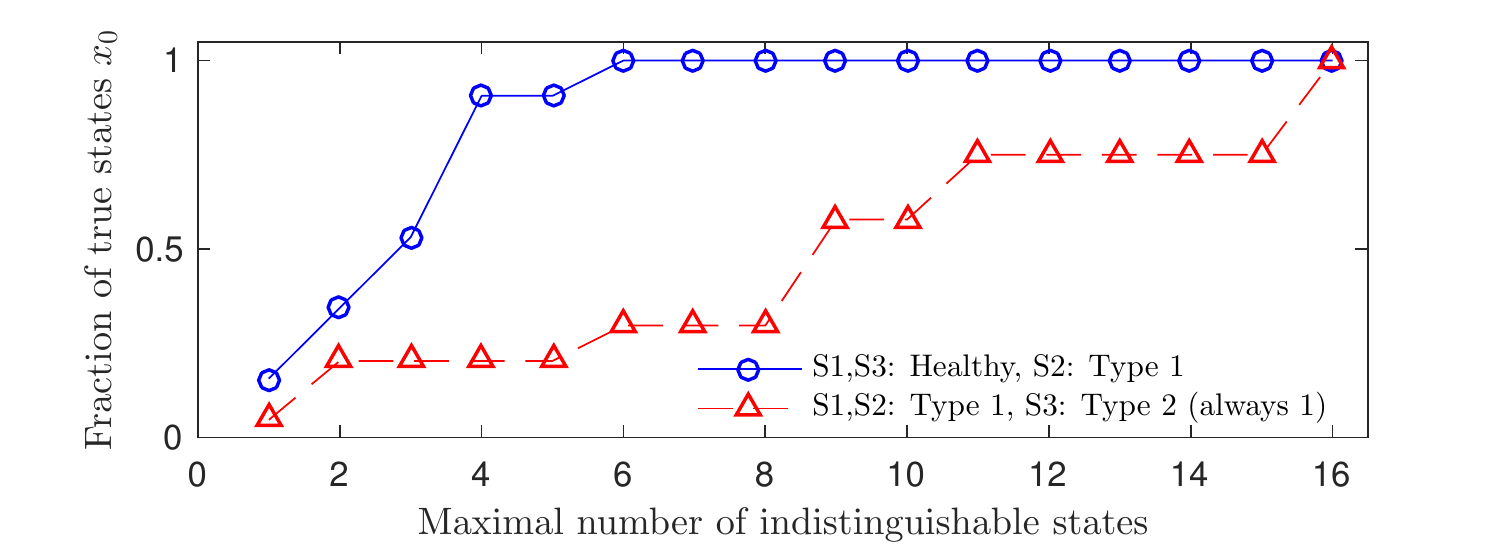}}

%  \subfigure[$S1, S2$ are fault-prone of Type 1 and $S3$ of Type 2.]{
%  \includegraphics[width=0.45\textwidth,trim= 0mm 10mm 2mm 15mm,clip]{Figures/6.png} }
  
  \caption{Fraction of true states $x_0$ with its corresponding maximal number indistinguishable states after 6 actions.} \label{fig:reward}
\end{figure}

\subsubsection{Maximal number of indistinguishable states} Figure \ref{fig:reward} compares the maximal numbers of indistinguishable states (corresponding to the minimal values of the reward function $f$) after 6 actions for all $x_0$ and two illustrative examples of $q_0$. The ordinate shows the fraction of system states $x_0$ (out of $2^6=64$) that obtains the number of indistinguishable states, $|\bigcup_{q\in \mathcal{Q}} S_{6,q}|$, that is less than or equal to the maximal number (on the abscissa). Since the adaptive greedy strategy performs equally well as the brute force method,  we omit the plots for the brute force policy that lie exactly on top of the plots for the adaptive greedy policy.  Moreover, comparing Figures \ref{fig:S2} and \ref{fig:all}, we observe that the maximal number of indistinguishable states is larger when more sensors are faulty, as expected. %since there will be naturally be more uncertainty. %A point at coordinates $(n\%, m)$ represents that among $m\%$ of the cases, the final result is eliminated to $n\%$ of all the possibilities. 
Furthermore, for most system states $x_0$, the best reward is already obtained by our adaptive greedy policy within 2 actions.

%\begin{figure}[H]
%\centering
%\includegraphics[width=0.5\textwidth]{Figures/5.png} 
%
%\caption{Histogram of execution time for the greedy strategy with all sensors healthy}
%\end{figure}

%\begin{figure}[H]
%\centering
%\includegraphics[width=0.5\textwidth]{Figures/6.png} 
%
%\caption{Histogram of execution time for the greedy strategy with $S1, S2$ faulty of type 1 and $S3$ faulty of type 2}
%\end{figure}
\vspace{-0.1cm}
\subsection{Tests on a larger circuit}
\label{Tests on a larger circuit}
\vspace{-0.05cm}
We also tested our greedy policy on %a (simplified version) of 
a larger circuit with more sensors and components (cf. %Figure \ref{fig:large} and 
\cite[Section V-B]{maillet2013} for details), which is more representative of aircraft power distribution systems. %shows the circuit topology before and after simplification. The simplified process is described in Quentin's paper. 
%It takes 49.1 seconds to do the offline computation, and is ten times longer than the small circuit, which only takes 4.76 seconds. 
When all sensors are healthy, the average execution time for obtaining the next best action is about 0.22 seconds. As before, the average execution time increases linearly with the number of sensor modes $|\mathcal{Q}|$. Similarly, the performance of our adaptive greedy policy with $6$ actions is as good as the brute force policy with $32$ actions.

%\begin{figure}[t]
%\centering
%\includegraphics[width=0.425\textwidth]{Figures/7.png} 
%\caption{A larger circuit with AC and DC components \cite{maillet2013}. with controllable contacters depicted in blue. \label{fig:large}}
%\end{figure}

%\begin{figure}[H]
%\centering
%\includegraphics[width=0.45\textwidth]{Figures/8.png} 
%\caption{Histogram of execution time for the greedy strategy with $S1, S2$ faulty of type 1 and $S3$ faulty of type 2 \label{fig:large}}
%\end{figure}
%\vspace{-0.05cm}
\section{Conclusions and Future Directions}
%\vspace{-0.05cm}
In this paper, we considered stochastic state estimation problems with partial observations via active sensing. First, we introduced a property for set functions, called \emph{weak adaptive submodularity}, that generalizes the concept of adaptive submodularity. Then, for the active diagnosis problem, we showed that adaptive greedy policies are near-optimal when the reward function is adaptive monotone and weakly adaptive submodular. Next, we considered the group-based active diagnosis problem, for which a special case is the active diagnosis problem with persistent sensor noise or faults, and proved that the group-based reward function is weakly adaptive submodular; %and this problem is simply an instance of the Adaptive Stochastic Maximization problem; 
hence the group-based active diagnosis problem can be solved using a simple adaptive greedy policy with guaranteed competitive performance when compared to the optimal adaptive policy. Our state estimation experiments with aircraft electrical systems plagued by persistent sensor faults demonstrated that the adaptive greedy policy performs just as well as a brute-force policy.

Future work will consider group-based active learning for decision-making using weak adaptive submodularity with the hope of removing the need for proxy set  functions and specialized algorithms, while preserving the provable competitiveness of adaptive greedy policies. We will also consider the general active diagnosis problem where the sensor noise and faults can be persistent and/or non-persistent.\vspace{-0.05cm}

\bibliographystyle{unsrt}        % Include this if you use bibtex 
\bibliography{biblio}           % and a bib file to produce the 
                                 % bibliography (preferred). The
                                 % correct style is generated by
                                 % Elsevier at the time of printing.

%\begin{thebibliography}{99}     % Otherwise use the  
                                 % thebibliography environment.
                                 % Insert the full references here.
                                 % See a recent issue of Automatica 
                                 % for the style.
%  \bibitem[Heritage, 1992]{Heritage:92}
%     (1992) {\it The American Heritage. 
%     Dictionary of the American Language.}
%     Houghton Mifflin Company.
%  \bibitem[Able, 1956]{Abl:56}
%     B.~C.~Able (1956). Nucleic acid content of macroscope. 
%     {\it Nature 2}, 7--9. 
%  \bibitem[Able {\em et al.}, 1954]{AbTaRu:54}   
%     B.~C. Able, R.~A. Tagg, and M.~Rush (1954).
%     Enzyme-catalyzed cellular transanimations.
%     In A.~F.~Round, editor, 
%     {\it Advances in Enzymology Vol. 2} (125--247). 
%     New York, Academic Press.
%  \bibitem[R.~Keohane, 1958]{Keo:58}
%     R.~Keohane (1958).
%     {\it Power and Interdependence: 
%     World Politics in Transition.}
%     Boston, Little, Brown \& Co.
%  \bibitem[Powers, 1985]{Pow:85}
%     T.~Powers (1985).
%     Is there a way out?
%     {\it Harpers, June 1985}, 35--47.

%\end{thebibliography}

\vspace{-0.2cm}
%\appendix
%\section{A summary of Latin grammar}    % Each appendix must have a short title.
%\section{Some Latin vocabulary}         % Sections and subsections are supported  
                                        % in the appendices.                                      
\appendix
%\section{Analysis and Proofs}

\subsection{Proof for Weak Adaptive Submodularity (Theorem \ref{thm:greedy1})} \label{app:submodular}

The following lemma will be used to prove Theorem \ref{thm:greedy1}.

\begin{lem}%[Adaptive Data Dependent Bound] 
\label{lem:bound}
Suppose we have observed partial realizations $\psi_t$ after selecting $v_{1:t}$ and let $\pi^*$ be any policy such that $|\tilde{\mathcal{V}}(\pi^*,\mathbf{x}_0)|\leq k$ for all $\mathbf{x}_0$. Then for adaptive monotone and $\zeta$-weakly adaptive submodular $f:2^\mathcal{V} \times \mathbf{\mathcal{X}} \to \mathbb{R}_{\geq 0}$, we have
\begin{align*}
\Delta(\pi^*|\psi_t) \leq \zeta \max_{\mathcal{A}\subseteq \mathcal{V},|\mathcal{A}|\leq k} \sum_{v \in \mathcal{A}} \Delta(v|\psi_t).
\end{align*}
\end{lem}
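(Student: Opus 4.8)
The plan is to unroll the policy $\pi^*$ along its decision tree and telescope the reward it accrues on top of $v_{1:t}$, then apply $\zeta$-weak adaptive submodularity action-by-action to pull every marginal benefit back to the conditioning realization $\psi_t$, and finally collapse the resulting sum into the stated set-maximum using nonnegativity.

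First I would fix a realization $\mathbf{x}$ consistent with $\psi_t$ and write the \emph{new} actions selected by $\pi^*$ (those not already in $v_{1:t}$) as $u_1(\mathbf{x}), u_2(\mathbf{x}), \dots$, a sequence of length at most $k$ determined by the path through $\pi^*$ under $\mathbf{x}$. Telescoping the reward along this sequence gives $f(v_{1:t}\cup\tilde{\mathcal{V}}(\pi^*,\mathbf{x}),\mathbf{x}) - f(v_{1:t},\mathbf{x}) = \sum_j [\,f(v_{1:t}\cup\{u_1,\dots,u_j\},\mathbf{x}) - f(v_{1:t}\cup\{u_1,\dots,u_{j-1}\},\mathbf{x})\,]$. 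Taking the conditional expectation $\mathbb{E}[\cdot\,|\psi_t]$ of both sides and invoking the tower property, each summand becomes $\mathbb{E}_{\psi_{(j-1)}|\psi_t}[\Delta(u_j|\psi_{(j-1)})]$, where $\psi_{(j-1)}$ is the (random) partial realization reached after $\pi^*$ has taken its first $j-1$ actions, so that $\psi_t\subseteq\psi_{(j-1)}$ and $u_j$ is exactly the action $\pi^*$ selects next; this is precisely the conditional expected marginal benefit of Definition \ref{def:marginGain} read off the policy tree.

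Next I would apply $\zeta$-weak adaptive submodularity (Definition \ref{def:submodular}): since $\psi_t$ is a subrealization of $\psi_{(j-1)}$ and the newly chosen $u_j$ lies in $\mathcal{V}\setminus v_{1:(t+j-1)}$ (assuming, without loss of generality, that $\pi^*$ never repeats an action, which costs nothing since $f$ is a set function and a repeat adds zero reward), we obtain $\Delta(u_j|\psi_{(j-1)})\leq\zeta\,\Delta(u_j|\psi_t)$. Summing over $j$ and re-collecting the expectation over realizations, the right-hand side equals $\zeta\,\mathbb{E}_{\mathbf{x}|\psi_t}[\sum_{v\in\mathcal{A}_{\mathbf{x}}}\Delta(v|\psi_t)]$, where $\mathcal{A}_{\mathbf{x}}=\tilde{\mathcal{V}}(\pi^*,\mathbf{x})\setminus v_{1:t}$ has cardinality at most $k$. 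Finally, adaptive monotonicity (Definition \ref{def:monotone}) guarantees $\Delta(v|\psi_t)\geq 0$, so for every $\mathbf{x}$ the inner sum is bounded by $\max_{\mathcal{A}\subseteq\mathcal{V},|\mathcal{A}|\leq k}\sum_{v\in\mathcal{A}}\Delta(v|\psi_t)$; as this bound is free of $\mathbf{x}$, taking expectations yields the claim.

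The main obstacle I anticipate is the bookkeeping of the conditional expectations rather than any analytic difficulty: justifying the tower-property step for an adaptive (and possibly randomized) $\pi^*$, so that the telescoped increments genuinely reassemble into $\mathbb{E}_{\mathbf{x}|\psi_t}[\sum_{v\in\mathcal{A}_{\mathbf{x}}}\Delta(v|\psi_t)]$ with the correct weighting of the reachable partial realizations $\psi_{(j-1)}$, and confirming that the no-repeated-action reduction is genuinely without loss of generality. Once this filtration over the policy's execution is set up correctly, the invocations of weak submodularity and monotonicity are immediate.
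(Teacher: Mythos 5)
Your argument is correct and is essentially the paper's own: the paper proves this lemma by pointing to Lemma A.9 of \cite{golovin2011} and noting that the only change is replacing $\Delta(v|\psi_{t'}) \leq \Delta(v|\psi_t)$ with $\Delta(v|\psi_{t'}) \leq \zeta\,\Delta(v|\psi_t)$, and your telescoping along the policy tree, tower-property step, action-by-action application of weak adaptive submodularity, and final bound by the max over sets of size at most $k$ is precisely that adapted argument written out in full. The bookkeeping points you flag (randomized policies, repeated actions contributing zero marginal reward, and the intermediate realizations being superrealizations of $\psi_t$) are handled exactly as you describe, so there is no gap.
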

%\begin{lem}[Adaptive Data Dependent Bound with Costs] \label{lem:boundCost}
%Suppose we observed partial realizations $\psi_t$ after selecting $v_{1:t}$ and let $\pi^*$ be any policy. Then for adaptive monotone and $\zeta$-adaptive near-submodular $f:2^\mathcal{V} \times \mathbf{\mathcal{X}} \to \mathbb{R}_{\geq 0}$, %we have
%\begin{align*}
%\Delta(\pi^*|\psi_t) \leq \zeta Z \leq \zeta c(\pi^*|\psi_t) \max_{v \in \mathcal{V}} \left(\frac{\Delta(v|\psi_t)}{c(v)}\right),
%\end{align*}
%where $Z\triangleq\max\limits_w\{\sum\limits_{v \in \mathcal{V}} w_v \Delta(v|\psi_t):\sum\limits_{v\in \mathcal{V}} c(v) w_v \leq c(\pi^*|\psi_t) \ \rm{and} \ \forall v \in \mathcal{V}, 0 \leq w_v \leq 1\}$.
%\end{lem}
\begin{proof}
This result can be obtained by a slight adaptation of the proof for Lemma A.9 in \cite{golovin2011}, where we replaced $\Delta(v|\psi_{t'}) \leq \Delta(v|\psi_t)$ with $\Delta(v|\psi_{t'}) \leq \zeta\Delta(v|\psi_t)$ when $\psi_{t'}$ is a subrealization of $\psi_t$, and with uniform action cost. 
\end{proof}

%\begin{thm} \label{eq:greedyGuarantee}
%Fix any %$\alpha\geq 1$, 
%$\zeta \geq 1$ and action costs $c: \mathcal{V} \to \mathbb{R}_{> 0}$. Let the %$\alpha$-approximate greedy policy $\pi_\ell^{\alpha{\text{-}}greedy}$ 
%greedy policy $\pi_\ell^{greedy}$ be run for $\ell$ iterations (so that it select $\ell$ actions), and $\pi^*_k$ be any policy selecting at most $k$ actions for any realization $\mathbf{x}$. 
%Then for adaptive monotone and $\zeta$-weakly adaptive submodular $f$,
%\begin{align*}
%f_{avg}(\pi_\ell^{greedy}) > (1-e^{-\ell/\zeta k}) f_{avg}(\pi^*_k).
%\end{align*}
%where $f_{avg} (\pi) \triangleq \mathbb{E}[f(\tilde{\mathcal{V}}(\pi,\mathbf{X}),\mathbf{X})]$ is the expected reward of the policy $\pi$ with respect to the distribution $\mathbb{P}[\mathbf{x}]$.
%\end{thm}
\begin{proof}[Proof of Theorem \ref{thm:greedy1}]
The proof goes along the lines of the analysis in Theorem A.10 in \cite{golovin2011} but for the generalized case when $\zeta\geq 1$ and with uniform action cost. 
First, we consider a corollary of Lemma \ref{lem:bound} for any $i$, %\ref{lem:boundCost}, 
which allows us to bound
\begin{align*}
\mathbb{E}[\Delta(\pi^*_k|\psi_i)] \leq \zeta k \max_{v \in \mathcal{V}} (\Delta(v|\psi_i)),
\end{align*}
where the expectation is taken over the internal randomness of $\pi^*_k$, if there is any, and we know that the budget of actions for all $\mathbf{x}_0$ is such that $\mathbb{E}[|\tilde{\mathcal{V}}(\pi^*,\mathbf{x}_0)|]\leq k$. % by definition. 
%It then follows that 
%$$\mathbb{E}[\Delta(\pi^*_k|\psi_i)] \leq \zeta k \max_{v \in \mathcal{V}} (\Delta(v|\psi_i)).$$
Then, %for all $i=0,\hdots,\ell$, %by definition, a greedy policy, $\pi_i$ (superscript omitted for brevity) obtains 
%\begin{align*}
%\begin{array}{rl}
%f_{avg}(\pi^*_k)-f_{avg}(\pi_i) &\leq f_{avg}(\pi_i@\pi^*_k)-f_{avg}(\pi_i)\\
%&= \sum_{j=1}^k \Delta()
%\end{array}
%\end{align*}
\begin{align*} 
f_{avg}(\pi_{i+1})-f_{avg}(\pi_i)&=\max_{v \in \mathcal{V}} (\Delta(v|\psi_i)) \\
&\geq \mathbb{E}[\Delta(\pi^*_k|\psi_i)]/ \zeta k\\
&=(f_{avg}(\pi_i@\pi^*_k)-f_{avg}(\pi_i))/ \zeta k,
\end{align*}
where $\pi_i@\pi^*_k$ is a policy obtained by running $\pi_i$ to completion, and then running policy $\pi^*_k$ as if from a fresh start and ignoring the gathered information $\psi_i$. 
The first equality holds because $\pi_i$ (superscript omitted for brevity) is a greedy policy, while the final equality holds by %because
%expected marginal benefit per unit cost in a step immediately following its observation of $\psi_i$. Next, 
taking an appropriate convex combination over the random partial realizations $\Psi_i$. %, we have
%\begin{align*}
%\begin{array}{rl}
%f_{avg}(\pi_{i+1})-f_{avg}(\pi_i) %&\geq \mathbb{E}\left[\frac{1}{\alpha} \max_{v \in \mathcal{V}} \Delta(v|\Psi_i)\right]\\
% &\geq \mathbb{E}\left[\frac{ \mathbb{E}
% \left[\Delta(v|\Psi_i)\right]}{ \zeta k} \right]\\
% &=\frac{f_{avg}(\pi_i@\pi^*_k)-f_{avg}(\pi_i)}{ \zeta k},
% \end{array}
%\end{align*}
%where $\pi_i@\pi^*_k$ is a policy obtained by running $\pi_i$ to completion, and then running policy $\pi^*_k$ as if from a fresh start and ignoring the gathered information $\psi_i$. 

Moreover, due to the adaptive monotonicity of $f$, by \cite[Lemma A.8]{golovin2011}, we have
$f_{avg}(\pi^*_k) \leq f_{avg}(\pi_i@\pi^*_k).$ 
Putting them together, we have for all $i=0,\hdots,\ell$,
\begin{align*}
f_{avg}(\pi^*_k)-f_{avg}(\pi_i)  \leq  \zeta k (f_{avg}(\pi_{i+1})-f_{avg}(\pi_i)).
\end{align*}
Then, defining $\Delta_i \triangleq f_{avg}(\pi^*_k)-f_{avg}(\pi_i)$, we obtain $\Delta_{i+1} \leq (1-\frac{1}{ \zeta k}) \Delta_i$ and thus,  $\Delta_{\ell} \leq (1-\frac{1}{ \zeta k})^\ell \Delta_0 < e^{-\ell/\zeta k} \Delta_0$, where we used the fact that $1-x < e^{-x}$ for all $x$ to obtain the last inequality. Finally, by the definition of $\Delta_i$, 
\begin{align*}
f_{avg}(\pi^*_k)-f_{avg}(\pi_\ell)&<e^{-\ell/\zeta k}(f_{avg}(\pi^*_k)-f_{avg}(\pi_0))\\
&\leq e^{-\ell/\zeta k}f_{avg}(\pi^*_k),
\end{align*}
since $f_{avg}(\pi_0)\geq 0$ by assumption. Hence, the result in the theorem follows directly from the above.
\end{proof}

\subsection{Proofs for Near-Optimal Group-Based Active Diagnosis} \label{app:proofs}
\subsubsection{Conditional Expected Marginal Benefit $\Delta (v|\psi_t)$}
Our proofs for Propositions \ref{prop:monotone} and \ref{prop:submodular} rely heavily on the particular form that $\Delta(v|\psi_t)$ takes on, given in the following:
\begin{lem}[Conditional Expected Marginal Benefit] \label{lem:Delta}
The conditional expected marginal benefit of action $v$ for the reward function $f(v_{1:t},x,q)$ in \eqref{eq:reward} can be expressed as
\begin{align} \label{eq:deltasup}
\Delta (v|\psi_t)= \sum_{i=1}^{|\mathcal{Y}|} \zeta_{i} \tau_{i} -\frac{\sum_{i=1}^{|\mathcal{Y}|} \zeta_{i} \tau_{i}^2}{\sum_{i=1}^{|\mathcal{Y}|} \tau_{i}},
\end{align}
for some $\tau_{i} \geq 0$ and $ \zeta_{i} \geq 1$ that are both explicitly dependent on the partial realization $\psi_t$ for all $1 \leq i \leq |\mathcal{Y}|$.
\end{lem}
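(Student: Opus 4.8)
The plan is to start directly from Definition \ref{def:marginGain} and the explicit reward in \eqref{eq:reward}, and to reduce the conditional expected marginal benefit to the difference between the \emph{current} expected union mass and the \emph{expected post-action} union mass. Concretely, for a fixed pair $(x,q)$ the realized outcome of action $v$ is the single value $y=\mu(v,x,q)$, and the increment $f(v_{1:t}\cup\{v\},x,q)-f(v_{1:t},x,q)$ equals $\sum_{\tilde{x}\in\bigcup_{\tilde{q}}S_{t,\tilde{q}}}\mathbb{P}[\tilde{x}]-\sum_{\tilde{x}\in\bigcup_{\tilde{q}}(S_{t,\tilde{q}}\cap D(y,v,\tilde{q}))}\mathbb{P}[\tilde{x}]$, which depends on $(x,q)$ only through $y$. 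Substituting the posterior \eqref{eq:condP} and the normalizer \eqref{eq:normalization}, $\Delta(v|\psi_t)$ becomes a posterior-weighted average of these increments, i.e.\ exactly the unlabeled expression preceding \eqref{eq:greedysub}.

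Next I would invoke the structural fact already used to pass from \eqref{eq:greedysub} to \eqref{eq:greedy}: for a fixed action $v$ and fixed mode $q$, the sets $\{D(y,v,q):y\in\mathcal{Y}\}$ partition $\mathcal{X}$, since each state returns a unique outcome $\mu(v,x,q)$. This lets me regroup the double sum over $(x,q)$ according to the observed outcome. Indexing $\mathcal{Y}=\{y_1,\dots,y_{|\mathcal{Y}|}\}$, I define $\tau_i\triangleq\sum_{q\in\mathcal{Q}}\sum_{x\in S_{t,q}\cap D(y_i,v,q)}\mathbb{P}[x,q]$, which is nonnegative and, by the partition property, satisfies $\sum_i\tau_i=\mathbb{P}[\psi_t]$ (this is \eqref{eq:normalization}). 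The posterior probability of observing $y_i$ is then $\tau_i/\sum_j\tau_j$, and the post-action union mass conditioned on $y_i$ is $A_i\triangleq\sum_{x\in\bigcup_q(S_{t,q}\cap D(y_i,v,q))}\mathbb{P}[x]$. Averaging the increments over outcomes immediately yields the second term: the expected post-action union mass equals $\sum_i A_i\tau_i/\sum_j\tau_j$, which is $\sum_i\zeta_i\tau_i^2/\sum_j\tau_j$ once I set $\zeta_i\triangleq A_i/\tau_i$ (for $\tau_i>0$; terms with $\tau_i=0$ are dropped).

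It then remains to (i) check $\zeta_i\geq 1$ and (ii) identify the first term as $\sum_i\zeta_i\tau_i$. For (i), observe that $\zeta_i$ is precisely the per-outcome ratio appearing inside the maximization defining $\zeta$ in Proposition \ref{prop:submodular}: every state $x$ in the union $\bigcup_q(S_{t,q}\cap D(y_i,v,q))$ contributes its full marginal $\mathbb{P}[x]=\sum_{q'}\mathbb{P}[x,q']$ to the numerator $A_i$, while contributing only the partial sum $\sum_{q:\,x\in S_{t,q}\cap D(y_i,v,q)}\mathbb{P}[x,q]\le\mathbb{P}[x]$ to the denominator $\tau_i$; hence $A_i\geq\tau_i$ and $\zeta_i\geq1$. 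This same inequality is what later produces the bounds in \eqref{eq:zeta}. For (ii), the current union mass must be re-expressed through the outcome-indexed sets $\bigcup_q(S_{t,q}\cap D(y_i,v,q))$, whose masses are exactly the $A_i=\zeta_i\tau_i$.

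The main obstacle is step (ii): accounting for the current union mass outcome by outcome. Because a single state can be compatible under several modes that assign it \emph{different} outcomes under $v$, the outcome-indexed sets $\bigcup_q(S_{t,q}\cap D(y_i,v,q))$ need not be disjoint, so the bookkeeping that turns the current union mass into $\sum_i\zeta_i\tau_i$ is the delicate part and must be carried out carefully via the per-mode partition (it is immediate in the single-mode, healthy-sensor case, where these sets are disjoint, $\zeta_i=1$, and one recovers the classical strongly adaptive submodular identity). Once both terms are matched, the representation \eqref{eq:deltasup} follows; as a by-product, rewriting it as $\sum_i\zeta_i\tau_i\bigl(1-\tau_i/\sum_j\tau_j\bigr)$ makes $\Delta(v|\psi_t)\geq0$ manifest, which is exactly what Proposition \ref{prop:monotone} needs, while exposing the local factors $\zeta_i$ used for Proposition \ref{prop:submodular}.
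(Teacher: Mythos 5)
Your proposal reconstructs the paper's proof essentially term for term: you introduce the same quantities $\tau_i=\sum_{q\in\mathcal{Q}}\sum_{x\in S_{t,q}\cap D(y_i,v,q)}\mathbb{P}[x,q]$ (the paper's $\tau_y$, which it obtains after an additional, purely cosmetic refinement by the level sets $F_{n,q}$ of $\mathbb{P}[x,q]$) and $\zeta_i=A_i/\tau_i$ with $A_i=\sum_{x\in\bigcup_{q}(S_{t,q}\cap D(y_i,v,q))}\mathbb{P}[x]$ (the paper's $\zeta(\psi_t,y)$ in \eqref{eq:zetaDef}); you regroup the posterior expectation by outcome using the per-mode partition $\{D(y,v,q)\}_{y\in\mathcal{Y}}$ of $\mathcal{X}$; and your argument that $\zeta_i\geq 1$ is exactly the paper's inequality \eqref{eq:inEqZeta}. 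Your derivation of the quadratic term $\sum_i\zeta_i\tau_i^2/\sum_j\tau_j$ via the posterior outcome probabilities $\tau_i/\sum_j\tau_j$ is complete and coincides with \eqref{eq:Stp}.

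The step you flag as delicate and do not carry out --- rewriting the pre-action union mass $\sum_{x\in\bigcup_{q}S_{t,q}}\mathbb{P}[x]$ as $\sum_i\zeta_i\tau_i$ --- is the only gap in your proposal, and you should know that the paper does not close it either: equation \eqref{eq:St} simply asserts $\sum_{x\in\bigcup_{q}S_{t,q}}\mathbb{P}[x]=\sum_{y}\sum_{x\in\bigcup_{q}(S_{t,q}\cap D(y,v,q))}\mathbb{P}[x]$ by appeal to the partition property, which holds for each fixed $q$ but does not make the outcome-indexed unions $U_y=\bigcup_{q}(S_{t,q}\cap D(y,v,q))$ pairwise disjoint. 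In exactly the scenario you describe --- a state compatible with two modes that send it to different outcomes under $v$ --- some $x$ lies in two of the $U_y$, the right-hand side over-counts $\mathbb{P}[x]$, and the asserted identity becomes a strict inequality, so \eqref{eq:deltasup} would hold only with its first term replaced by the (possibly smaller) true union mass. In short, your proposal is the paper's proof minus one unjustified equality, and you have correctly isolated that equality as the point where the argument needs either a disjointness hypothesis or repaired bookkeeping; adaptive monotonicity (Proposition \ref{prop:monotone}) survives either way because each $A_i\leq\sum_{x\in\bigcup_{q}S_{t,q}}\mathbb{P}[x]$, but the sandwich \eqref{eq:inEq} used for Proposition \ref{prop:submodular} relies on the exact form \eqref{eq:deltasup}.
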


\begin{proof}
By definition, 
\begin{align*}
\Delta(v|\psi_t)&=\mathbb{E}[f(v_{1:t}\cup \{v\},X,Q)-f(v_{1:t},X,Q)| \psi_t]\\
&=\mathbb{E}[\tilde{f}(v_{1:t}\cup \{v\},X,Q)-\tilde{f}(v_{1:t},X,Q)| \psi_t]
\end{align*} with the expectation taken with respect to $\mathbb{P}[x,q|\psi_t]$ and where $\tilde{f}(v_{1:t},x,q) \triangleq -\sum_{x \in \bigcup\limits_{q \in \mathcal{Q}} S_{t,q}} \mathbb{P}[x]$. The individual terms can be computed as:
\begin{align}
&\hspace{-0.3cm} \begin{array}{ll}
\mathbb{E}[\tilde{f} (v_{1:t}\cup \{v\},X,Q)| \psi_t]\\
=\sum\limits_{q \in \mathcal{Q}} \sum\limits_{x \in S_{t,q}} \mathbb{P}[x,q|\psi_t] \tilde{f} (v_{1:t}\cup\{v\},x,q)\\
=-\sum\limits_{q \in \mathcal{Q}} \sum\limits_{x \in S_{t,q}} \hspace*{-0.1cm}\mathbb{P}[x,q|\psi_t] \hspace*{-0.1cm}\sum\limits_{\tilde{x} \in \bigcup\limits_{\tilde{q} \in \mathcal{Q}} (S_{t,\tilde{q}} \cap D(\mu(v,x,q),v,\tilde{q}))} \hspace*{-0.1cm}\mathbb{P}[\tilde{x}], \hspace*{-0.35cm}
\end{array} \label{eq:Stp1}\\
%\end{align}
%\begin{align}
&\hspace{-0.3cm} \begin{array}{ll}
\mathbb{E}[\tilde{f} (v_{1:t},X,Q)| \psi_t]\hspace*{-0.3cm}&=\sum\limits_{q \in \mathcal{Q}} \sum\limits_{x \in S_{t,q}} \mathbb{P}[x,q|\psi_t] \tilde{f} (v_{1:t},x,q)\\
\hspace*{-0.3cm}&=-\sum_{\tilde{x} \in \bigcup\limits_{\tilde{q} \in \mathcal{Q}} S_{t,\tilde{q}}} \mathbb{P}[\tilde{x}],
\end{array} \hspace*{-0.3cm}\label{eq:St1}
\end{align}
where the final equality holds because $\tilde{f} (v_{1:t},x,q)$ is independent of $q$ and $x$.
%where only the final terms in \eqref{eq:Stp1} and \eqref{eq:St} contribute to $\Delta (v|\psi_t)$, as was remarked in Section \ref{sec:math}. 

To proceed, since the probability measure on $(\mathcal{X},\mathcal{Q})$ is non-uniform and can take values in some set $\{p_1,\hdots,p_N\}$, we define subsets of $(\mathcal{X},\mathcal{Q})$ where $\mathbb{P}[x,q]$ is constant with $F_{n,q}\triangleq\{(x,q) \in \mathcal{X} \times \mathcal{Q}\ |\ \mathbb{P}[x,q] =p_n\}$. Furthermore, since for every $x \in \mathcal{X}$, there is a corresponding $y=\mu(v,x,q)$ for any given  $v\in \mathcal{V}$ and $q \in \mathcal{Q}$, the sets $\{D(y,v,q)\ |\ y \in \mathcal{Y}\}$ also form a partition of $\mathcal{X}$. Finally, we define a partition of $S_{t,q}$ for each $q \in \mathcal{Q}$ using the sets $\{S_{t,q} \cap D(y,v,q) \cap F_{n,q}\ | \ y \in \mathcal{Y}, n \in 1:N\}$ and denote with a shorthand $\alpha_{n,y,q} \triangleq S_{t,q} \cap D(y,v,q) \cap F_{n,q}$. Thus, for all $(x,q) \in \alpha_{n,y,q}$, $\mu(v,x,q)=y$ and $\mathbb{P}[x,q]=p_n$. %, and we denote the cardinality of the set $\alpha_{n,y}$ as $|\alpha_{n,y}|$.

Moreover, for all $x \in F_{n,q}$, the conditional probabilities in \eqref{eq:condP} can be rewritten as
\begin{align}
\mathbb{P}[{x,q}|\psi_t]=\frac{p_n}{\mathbb{P}[\psi_t]} \triangleq \frac{p_n}{\sum_{y \in \mathcal{Y}} \tau_{y}}, \label{eq:condPR}
\end{align}
where we obtain the expression for $\mathbb{P}[\psi_t]$ from \eqref{eq:normalization} as
\begin{align*}
\begin{array}{rl}
\mathbb{P}[\psi_t]&=\sum\limits_{q\in \mathcal{Q}}\sum\limits_{x \in S_{t,q}} \mathbb{P}[x,q]\\
&=\sum\limits_{n \in 1:N} \sum\limits_{y \in \mathcal{Y}} \sum\limits_{q\in \mathcal{Q}} \sum_{x \in S_{t,q} \cap D(y,v,q) \cap F_{n,q}} p_n\\
&=\sum\limits_{y\in \mathcal{Y}} \sum\limits _{n \in 1:N}  \sum\limits_{q\in \mathcal{Q}}  p_n |\alpha_{n,y,q}|
=\sum\limits_{y \in \mathcal{Y}} \tau_{y},
\end{array}
\end{align*}
and defined $\tau_{y} \triangleq \displaystyle\sum_{n \in 1:N}  \sum_{q\in \mathcal{Q}}  p_n |\alpha_{n,y,q}| \geq 0$.

Furthermore, the following inequality holds:
\begin{align}
\sum\limits_{{x} \in \bigcup\limits_{\tilde{q} \in \mathcal{Q}} \hspace{-0.1cm} S_{t,\tilde{q}}} \hspace{-0.25cm}\mathbb{P}[{x}] = \sum\limits_{q \in \mathcal{Q}} \sum\limits_{{x} \in \bigcup\limits_{\tilde{q} \in \mathcal{Q}} \hspace{-0.1cm} S_{t,\tilde{q}}} \hspace{-0.25cm} \mathbb{P}[x,q]\geq \sum\limits_{q \in \mathcal{Q}} \sum_{x \in S_{t,q}}\hspace{-0.1cm} \mathbb{P}[x,q], \hspace{-0.1cm}\label{eq:inEqZeta} 
\end{align}
since on the right hand side, $q$ is a particular choice of $\tilde{q} \in \mathcal{Q}$. Using the above-defined partitions, we can equivalently write
\begin{align*}
\begin{array}{rl}
\sum\limits_{{x} \in \bigcup\limits_{\tilde{q} \in \mathcal{Q}} S_{t,\tilde{q}}\cap D(y,v,\tilde{q})} \hspace*{-0.2cm}\mathbb{P}[{x}] &=\sum\limits_{n \in 1:N} \sum\limits_{{x} \in \bigcup\limits_{\tilde{q} \in \mathcal{Q}} S_{t,\tilde{q}}\cap D(y,v,\tilde{q}) \cap F_{n,\tilde{q}}}\hspace*{-0.2cm} \mathbb{P}[{x}] \\
&= \zeta(\psi_t,y) \sum\limits_{n \in 1:N} \sum\limits_{q \in \mathcal{Q}} \sum_{x \in \alpha_{n,y,q}} p_n\\ %\sum_{x \in S_{t,q} \cap D(y,v,q) \cap F_{n,q}} p_n, %\label{eq:ineq}
&=\zeta(\psi_t,y)\tau_y,
\end{array}
\end{align*}
where we defined
\begin{align}
\zeta(\psi_t,y)\triangleq \frac{\sum_{{x} \in \bigcup\limits_{\tilde{q} \in \mathcal{Q}} S_{t,\tilde{q}}\cap D(y,v,\tilde{q})} \mathbb{P}[{x}]}{\sum_{q \in \mathcal{Q}} \sum_{{x} \in  S_{t,q}\cap D(y,v,q)}  \mathbb{P}[{x,q}]}  \label{eq:zetaDef}
\end{align}
and from \eqref{eq:inEqZeta}, we have $\zeta(\psi_{t},y) \geq 1$. %which depends on $\psi'_{t}\triangleq \psi_{t}\cup \{v,\mu(v,x)\}$, which 
This simplifies \eqref{eq:Stp1} to
\begin{align}
\begin{array}{ll}
\mathbb{E}[\tilde{f} (v_{1:t}\cup \{v\},X,Q)| \psi_t]\\
%=-1 - \sum_{x \in S_t} \mathbb{P}[x|\psi_t] \sum_{R \in \mathcal{R}} \mathbb{P}[R: x \in R]\\
=-\sum\limits_{q \in \mathcal{Q}} \sum\limits_{x \in S_{t,q}} \hspace*{-0.1cm}\mathbb{P}[x,q|\psi_t]  \zeta(\psi_t,\mu(v,x,q)) \tau_{\mu(v,x,q)}\\
=-\frac{1}{\mathbb{P}[\psi_t]}\sum\limits_{y\in \mathcal{Y}}\sum \limits_{n \in 1:N}  \sum\limits_{q \in \mathcal{Q}} \sum\limits_{x \in \alpha_{n,y,q}} \hspace*{-0.1cm}\mathbb{P}[x,q]  \zeta(\psi_t,y) \tau_{y}\\
=-\frac{\sum_{y\in \mathcal{Y}} \zeta(\psi_t,y) \tau^2_{y}}{\sum_{y\in \mathcal{Y}} \tau_{y}}.
\end{array} \label{eq:Stp}
\end{align}

%Using these partitions and the conditional probabilities in \eqref{eq:condPR}, the final term in \eqref{eq:Stp} becomes
%\begin{align}
%\begin{array}{l}
%\displaystyle\sum_{x \in S_t} \mathbb{P}[x|\psi_t]  \sum_{\tilde{x} \in S_t \cap D(\mu(v,x),v)} \kappa_R(\tilde{x},\psi'_t) \mathbb{P}[\tilde{x}]\\
%=\displaystyle\sum_{x \in S_t} \mathbb{P}[x|\psi_t]  \sum_{n \in 1:N} \sum_{\tilde{x} \in \alpha_{n,y}} \kappa_R(\tilde{x},\psi'_t) p_n\\
%=\displaystyle\sum_{x \in S_t} \mathbb{P}[x|\psi_t] \zeta_R(\psi'_t)  \sum_{n \in 1:N} \sum_{\tilde{x} \in \alpha_{n,y}}  p_n\\
%=\displaystyle\sum_{x \in S_t} \mathbb{P}[x|\psi_t] \zeta_R(\psi'_t) \tau_{R,\mu(v,x)}\\
%=\displaystyle\frac{1}{\sum_{y \in \mathcal{Y}} \tau_{R,y}} \displaystyle\sum_{n \in 1:N} \sum_{y \in \mathcal{Y}} \sum_{x \in \alpha_{n,y}} p_n \zeta_R(y) \tau_{R,y}\\
%=\displaystyle\frac{1}{\sum_{y \in \mathcal{Y}} \tau_{R,y}}\displaystyle \sum_{y \in \mathcal{Y}}  \zeta_R(y) \tau_{R,y} \sum_{n \in 1:N} \sum_{x \in \alpha_{n,y}} p_n \\
%=\displaystyle\frac{1}{\sum_{y \in \mathcal{Y}} \tau_{R,y}} \displaystyle \sum_{y \in \mathcal{Y}}  \zeta_R(y) \tau_{R,y}^2
%\end{array} \label{eq:Stpf}
%\end{align}
%where once again, we used $\alpha_{n,y} \triangleq S_t \cap D(y,v) \cap F_n$ and $$\sum_{n \in 1:N} \sum_{\tilde{x} \in \alpha_{n,y}} \kappa_R(\tilde{x},\psi'_t) p_n=\zeta_R(\psi'_t)  \sum_{n \in 1:N} \sum_{\tilde{x} \in \alpha_{n,y}}  p_n$$ for $\kappa(\tilde{x},\psi'_t) \geq 1$ implies that $\zeta_R(\psi'_t) \geq 1$.

Similarly, \eqref{eq:St1} simplifies to
\begin{align}
\hspace{-0.3cm} \begin{array}{ll}
\mathbb{E}[\tilde{f} (v_{1:t},X,Q)| \psi_t]\hspace*{-0.3cm}&=-\sum\limits_{y \in \mathcal{Y}}\sum\limits_{\tilde{x} \in \bigcup\limits_{\tilde{q} \in \mathcal{Q}} S_{t,\tilde{q}}\cap D(y,v,\tilde{q})} \mathbb{P}[\tilde{x}]\\
\hspace*{-0.3cm}&=-\sum_{y \in \mathcal{Y}} \zeta(\psi_t,y)\tau_y.
\end{array} \label{eq:St}
\end{align}
Finally, from \eqref{eq:Stp} and \eqref{eq:St}, we obtain $\Delta (v|\psi_t)$ of the form \eqref{eq:deltasup} with $\zeta_{y}=\zeta(\psi_t,y)\geq 1$ and $\tau_{y}\geq 0$ for all $y \in \mathcal{Y}$.
\end{proof}

\subsubsection{Performance Guarantees} 

Armed with Lemma \ref{lem:Delta}, Proposition \ref{prop:monotone} can be shown to directly hold.
\begin{proof}[Proof of Proposition \ref{prop:monotone}]
From \eqref{eq:deltasup} in Lemma \ref{lem:Delta},
\begin{align*}
\begin{array}{rl}
\Delta(v|\psi_t) \hspace{-0.25cm} &=\displaystyle \frac{ (\sum_{i=1}^{|\mathcal{Y}|} \tau_{i}) (\sum_{i=1}^{|\mathcal{Y}|} \zeta_{i} \tau_{i}) -\sum_{i=1}^{|\mathcal{Y}|} \zeta_{i} \tau_{i}^2}{\sum_{i=1}^{|\mathcal{Y}|} \tau_{i}}\\
&=\displaystyle\frac{ \sum_{i=1}^{|\mathcal{Y}|} (\zeta_{i} \tau_{i} \sum_{j\neq i} \tau_{j}) }{\sum_{i=1}^{|\mathcal{Y}|} \tau_{i}} \geq 0,
\end{array}
\end{align*}
since $\zeta_i \geq 1$ and $\tau_i \geq 0$ for all $i=1,\hdots,|\mathcal{Y}|$. Hence, adaptive monotonicity holds by Definition \ref{def:monotone}.
\end{proof}

To prove Proposition \ref{prop:submodular}, i.e., that the reward function $f$ in \eqref{eq:reward} is weakly adaptive submodular, we first state the following lemma from \cite{maillet2013} that will be useful.
\begin{lem}{\cite[Lemma 1]{maillet2013}} \label{lem:b}
The function $b:\mathbb{R}^{|\mathcal{Y}|} \to \mathbb{R}$
\begin{align} \label{eq:b}
b(\tau_1,\tau_2,\hdots,\tau_{|\mathcal{Y}|})=\sum_{i=1}^{|\mathcal{Y}|} \tau_i -\frac{\sum_{i=1}^{|\mathcal{Y}|}  \tau_i^2}{\sum_{i=1}^{|\mathcal{Y}|}  \tau_i}
\end{align}
is increasing on the positive orthant, i.e., $b(\tau_1,\tau_2,\hdots,\tau_{|\mathcal{Y}|})$ $\geq b(s_1,s_2,\hdots,s_{|\mathcal{Y}|})$ if $\tau _i \geq s_i \geq 0$ for all $1 \leq i \leq |\mathcal{Y}|$.
%Moreover, $b(\tau_1,\tau_2,\hdots,\tau_{|\mathcal{Y}|}) \geq 0$.
\end{lem}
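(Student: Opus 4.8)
The plan is to establish the stronger pointwise statement that each partial derivative of $b$ is nonnegative wherever the denominator is positive, and then chain this into the claimed monotonicity. Write $T \triangleq \sum_{i=1}^{|\mathcal{Y}|} \tau_i$ and $Q \triangleq \sum_{i=1}^{|\mathcal{Y}|} \tau_i^2$, so that $b = T - Q/T = (T^2 - Q)/T$, which is continuously differentiable on the open set $\{T > 0\}$. First I would fix an index $k$ and differentiate, using $\partial T/\partial \tau_k = 1$ and $\partial Q/\partial \tau_k = 2\tau_k$, to obtain
\begin{align*}
\frac{\partial b}{\partial \tau_k} = 1 - \frac{2\tau_k T - Q}{T^2} = \frac{T^2 - 2\tau_k T + Q}{T^2}.
\end{align*}

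The key step is to show the numerator is nonnegative. I would complete the square by adding and subtracting $\tau_k^2$:
\begin{align*}
T^2 - 2\tau_k T + Q = (T - \tau_k)^2 + \bigl(Q - \tau_k^2\bigr) = \Bigl(\sum_{i \neq k} \tau_i\Bigr)^2 + \sum_{i \neq k} \tau_i^2,
\end{align*}
which is manifestly nonnegative on the positive orthant since every $\tau_i \geq 0$. Hence $\partial b/\partial \tau_k \geq 0$ for every $k$ wherever $T > 0$, i.e., $b$ is nondecreasing in each coordinate separately.

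To conclude the full statement $b(\tau) \geq b(s)$ whenever $\tau_i \geq s_i \geq 0$, I would, in the generic case $\sum_i s_i > 0$, connect $s$ to $\tau$ by a coordinatewise-monotone path that raises one coordinate at a time and integrate the nonnegative partial derivatives along each segment; since every coordinate only increases, the path never leaves $\{T > 0\}$ and $b$ cannot decrease. The point requiring care is the domain: $b$ is undefined at the origin because of the division by $\sum_i \tau_i$, so the argument must avoid $T = 0$. This is not a genuine obstacle, since in the application of Lemma \ref{lem:Delta} we always have $\sum_y \tau_y = \mathbb{P}[\psi_t] > 0$; and for the sole degenerate corner $s = 0$ one simply notes $b(\tau) \geq 0 = b(0)$ by combining the elementary inequality $(\sum_i \tau_i)^2 \geq \sum_i \tau_i^2$ (valid for nonnegative reals) with the convention $b(0) \triangleq 0$ obtained by continuity. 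The partial-derivative computation together with the completing-the-square identity is the crux; the remainder is routine bookkeeping.
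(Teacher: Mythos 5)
Your proof is correct. Note first that the paper itself does not prove this lemma: it is imported verbatim as Lemma 1 of the cited reference \cite{maillet2013}, so there is no in-paper argument to compare against. Your derivation is a valid self-contained replacement. The computation $\partial b/\partial \tau_k = (T^2 - 2\tau_k T + Q)/T^2$ with $T=\sum_i\tau_i$, $Q=\sum_i\tau_i^2$ is right, and the completing-the-square identity
\begin{align*}
T^2 - 2\tau_k T + Q = \Bigl(\sum_{i\neq k}\tau_i\Bigr)^2 + \sum_{i\neq k}\tau_i^2 \geq 0
\end{align*}
is the essential observation; the coordinatewise-monotone path argument then legitimately upgrades the sign of the partials to the stated partial-order monotonicity, and you are right that the path stays in $\{T>0\}$ because $T$ is nondecreasing along it. You also correctly flag the only delicate point, the singularity at the origin, and your resolution (the inequality $(\sum_i\tau_i)^2\geq\sum_i\tau_i^2$ for nonnegative reals, together with the continuity convention $b(0)=0$, which is consistent with $b(t\tau)=t\,b(\tau)\to 0$) is sound and matches how the lemma is actually used, since in Lemma \ref{lem:Delta} one always has $\sum_y\tau_y=\mathbb{P}[\psi_t]>0$. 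As a minor stylistic alternative, writing $b=(T^2-Q)/T=\bigl(\sum_{i\neq j}\tau_i\tau_j\bigr)/T$ makes the nonnegativity of $b$ immediate and mirrors the algebraic rewriting the paper uses in the proof of Proposition \ref{prop:monotone}, but your route is equally rigorous.
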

%\begin{proof}
%Since $b$ is permutation invariant with respect to its arguments, it is sufficient to show that it is increasing in one of its arguments. Let $k_1\triangleq \sum_{i=2}^{|\mathcal{Y}|} \zeta_i \tau_i$, $k_2\triangleq \sum_{i=2}^{|\mathcal{Y}|} \zeta_i \overline{\zeta}_i \tau_i^2$  and $k_3\triangleq \sum_{i=2}^{|\mathcal{Y}|} \overline{\zeta}_i \tau_i$. Further, we define $\tilde{b}(x) \triangleq b(x,\tau_2,\hdots,\tau_{|\mathcal{Y}|})=k_1 +\zeta_1 x - \frac{k_2+\zeta_1 \overline{\zeta}_1 x^2}{k_3 + \overline{\zeta}_1 x}$. The partial derivative of $\tilde{b}$ with respect to $x$ is $$\frac{\partial \tilde{b}}{\partial x}=\frac{\zeta_1 k_3^2 + k_2 \overline{\zeta}_1}{(k_3+\overline{\zeta}_1 x)^2},$$
%which is non-negative by the definitions of $k_2$ and $k_3$ and the non-negativity assumption of $\zeta_1$ and $\overline{\zeta}_1$. Finally, $b(\tau_1,\tau_2,\hdots,\tau_{|\mathcal{Y}|})$ is non-negative because $(\sum_{i=1}^{|\mathcal{Y}|} \zeta_i \tau_i) (\sum_{i=1}^{|\mathcal{Y}|} \overline{\zeta}_i \tau_i) \geq \sum_{i=1}^{|\mathcal{Y}|} \zeta_i \overline{\zeta}_i \tau_i^2$ for $\tau_i \geq 0$, $\zeta_i \geq 0$ and $\overline{\zeta}_i \geq 0$ for all $1\leq i \leq |\mathcal{Y}|$.
%\end{proof}

\begin{proof}[Proof of Proposition \ref{prop:submodular}] First, taking the partial derivative of \eqref{eq:deltasup} with respect to $\zeta_i$, we obtain
\begin{align*}
\frac{\partial\Delta (v|\psi_t)}{\partial \zeta_i}=  \tau_{i} -\frac{\tau_{i}^2}{\sum_{i=1}^{|\mathcal{Y}|} \tau_{i}}=\frac{\tau_{i}\sum_{j=1,\hdots,|\mathcal{Y}|, j \neq i} \tau_{j}}{\sum_{i=1}^{|\mathcal{Y}|} \tau_{i}} \geq 0.
\end{align*}

From the above, we can conclude that
\begin{align}
b(\psi_t) \leq b(\psi_t) \min_i \zeta_i  \leq \Delta(v|\psi_t) \leq b(\psi_t)\max_i \zeta_i \leq \zeta b(\psi_t), \label{eq:inEq}
\end{align}
where we used $b(\psi_t)$ as a shorthand for $b(\tau_1,\tau_2,\hdots,\tau_{|\mathcal{Y}|})$ defined in \eqref{eq:b} since $\psi_t$ is related to $\tau_i$'s via \eqref{eq:deltasup}, and $\zeta$ is given in \eqref{eq:zeta}, which is conservatively chosen to be larger than $\max_i \zeta_i$ across all possible realizations and sequence of actions, while by Lemma \ref{lem:Delta}, we have $\min_i \zeta_i \geq 1$ . 

Moreover, the expression for $\Delta(v|\psi_{t'})$ is similar for the partial realization $\psi_{t'}$ but with different $\tau'_i$ and $\zeta'_i$.  For $\psi_t$ as a subrealization of $\psi_{t'}$, i.e., $\psi_t \subseteq \psi_{t'}$, by construction, $S_{t',q} \subseteq S_{t,q}$ for all $q \in \mathcal{Q}$ and by definition of $\tau_i$ and Lemma \ref{lem:b}, we know that $\tau'_i \leq \tau_i$ for all $i$ and $b(\psi_{t'}) \leq b(\psi_t)$.

Note that the following inequality from \eqref{eq:inEq}
$$\Delta(v|\psi_{t'}) \leq \zeta b(\psi_{t'})$$
also holds for $\psi_{t'}$ since we have chosen $\zeta$ to be the maximum across all $t$. Thus, from the above and from \eqref{eq:inEq}, we find
\begin{align*}
\Delta(v|\psi_t) \geq b(\psi_t) \geq b(\psi_{t'}) \geq \frac{1}{\zeta} \Delta(v|\psi_{t'}),
\end{align*}
from which $\zeta$-weak adaptive submodularity holds by Definition \ref{def:submodular}. Further, we can find the upper bounds on $\zeta$ since
\begin{align*}
\begin{array}{rl}
\sum\limits_{x \in \bigcup_{q \in \mathcal{Q}} S_{t,q}\cap D(y,v,{q})} \mathbb{P}[x] &\leq \sum\limits_{q\in \mathcal{Q}}\sum\limits_{x \in  S_{t,{q}}\cap D(y,v,{q})} \mathbb{P}[x]\\
&\leq \sum\limits_{q\in \mathcal{Q}}\sum\limits_{x \in  \mathcal{X}} \mathbb{P}[x] =|\mathcal{Q}|
\end{array}
\end{align*}
and $\sum\limits_{q\in \mathcal{Q}} \sum\limits_{x\in S_{t,q} \cap D(y,v,q)}\hspace*{-0.2cm} \mathbb{P}[x,q]\geq \min\limits_{\{x \in \mathcal{X}, q \in \mathcal{Q}: \mathbb{P}[x,q]>0\}}\mathbb{P}[x,q]$, while $\zeta\geq 1$ by \eqref{eq:zetaDef}.
\end{proof}

\end{document}